\DeclareMathOperator*{\argmin}{argmin}
\newtheorem{assumption}{Assumption}
\begin{document}

\title{\bf A dynamic programming approach for generalized  nearly isotonic  optimization}
\titlerunning{DP for GNIO}        %

\author{Zhensheng Yu  \and
		Xuyu Chen \and
		Xudong Li 
}

\institute{Zhensheng Yu \at College of Science, University of Shanghai for Science and Technology, No. 516, Jungong Road, Shanghai, China \\
               \email{zhsh-yu@163.com}
           \and
           Xuyu Chen \at School of Mathematics, Fudan University, No. 220, Handan Road, Shanghai, China \\
               \email{chenxy18@fudan.edu.cn}
           \and
           Xudong Li \at School of Data Science, Fudan University, No. 220, Handan Road, Shanghai, China \\
           		 \email{lixudong@fudan.edu.cn}
}

\date{Received: date / Accepted: date}

\maketitle

\begin{abstract}
Shape restricted statistical estimation problems have been extensively studied, with many important practical applications in signal processing, bioinformatics, and machine learning. In this paper, we propose and study a generalized nearly isotonic optimization (GNIO) model, which recovers, as special cases, many classic problems in shape constrained statistical regression, such as   isotonic regression, nearly isotonic regression and  unimodal regression problems. We develop an efficient and easy-to-implement dynamic programming algorithm for solving the proposed model whose recursion nature is carefully uncovered and exploited. For special $\ell_2$-GNIO problems, implementation details and the optimal ${\cal O}(n)$ running time analysis of our algorithm are discussed. Numerical experiments, including the comparisons among our approach, the powerful commercial solver Gurobi, and existing fast algorithms for solving $\ell_1$-GNIO and $\ell_2$-GNIO problems, on both simulated and real data sets, are presented to demonstrate the high
	efficiency and robustness of our proposed algorithm in solving large scale GNIO problems. 
\keywords{Dynamic programming \and generalized nearly isotonic optimization \and shape constrained statistical regression}
\subclass{90C06  \and 90C25 \and 90C39}
\end{abstract}

\vfill

\section{Introduction}
\label{intro}
In this paper, we are interested in solving the following convex composite optimization problem:
\begin{equation} \label{eq:gip}
\min_{x\in\Re^n} \quad \sum_{i=1}^{n} f_i(x_i) + \sum_{i=1}^{n-1} \lambda_{i} (x_i - x_{i+1})_{+} + \sum_{i=1}^{n-1} \mu_{i} (x_{i+1} - x_{i})_{+},
\end{equation}
where  $f_i:\Re \to \Re$, $i=1,\ldots,n$,  are convex loss functions, $\lambda_i$ and $\mu_i$ are nonnegative and possibly {infinite} scalars, i.e.,
\[
0\le \lambda_i, \mu_i \le +\infty, \quad \forall\, i = 1,\ldots, n,
\] 
and $(x)_{+} = \max(0,x)$ denotes the nonnegative part of $x$  for any $x\in\Re$. Here, if for some $i\in \{1,\ldots, n-1\}$, $\lambda_{i} = +\infty$ (respectively, $\mu_{i} = +\infty$), the corresponding regularization term $\lambda_{i}(x_{i} - x_{i+1})_+$ (respectively,  $\mu_{i}(x_{i+1} - x_{i})_+$) in the objective of \eqref{eq:gip} shall be understood as the indicator function $\delta(x_i, x_{i+1} \mid x_i - x_{i+1} \le 0)$ (respectively, $\delta(x_i, x_{i+1} \mid x_{i} -  x_{i+1} \ge 0)$), or equivalently the constraint $x_i - x_{i+1} \le 0$ (respectively, $x_{i} -  x_{i+1} \ge 0$).
To guarantee the existence of optimal solutions to problem \eqref{eq:gip}, throughout the paper, we make the following blanket assumption:
\begin{assumption}\label{assum:level_set}
	Each $f_i:\Re \to \Re$, $i=1,\ldots,n$, is a convex function and has bounded level-sets, i.e., there exists $\alpha \in \Re$ such that the $\alpha$-level set $\left\{x\in\Re\mid f_i(x)\le \alpha \right\}$ is non-empty and bounded. 
\end{assumption}
\noindent Indeed, under this mild assumption, it is not difficult to see that the objective function in problem \eqref{eq:gip} is also a convex function and has bounded level-sets. Then, by \cite[Theorems 27.1 and 27.2]{rockafellar1970convex}, we know that problem \eqref{eq:gip} has a non-empty optimal solution set.

Problem \eqref{eq:gip} is a generalization of the following nearly isotonic regression problem \cite{Tibshirani2016nearly}:
\begin{equation}
\label{prob:niso}
\min_{x\in \Re^n} \;  \frac{1}{2} \sum_{i=1}^{n} (x_i - y_i)^2 + \lambda \sum_{i=1}^{n-1}(x_i - x_{i+1})_{+},
\end{equation}
where the nonnegative scalar $\lambda \in\Re$ is a given parameter, $y_i\in\Re$, $i=1,\ldots, n$, are $n$ given data points. Clearly, \eqref{eq:gip} can be regarded as a generalization of model \eqref{prob:niso} in the sense that more general convex loss functions $f_i$ and regularizers (constraints) are considered; hence we refer \eqref{eq:gip} as the {\it generalized nearly isotonic optimization} ({\bf GNIO}) problem and the term  $\sum_{i=1}^{n-1} \lambda_{i} (x_i - x_{i+1})_{+} + \sum_{i=1}^{n-1} \mu_{i} (x_{i+1} - x_{i})_{+}$ in the objective of \eqref{eq:gip} as the {\it generalized nearly isotonic regularizer}. We further note that model \eqref{eq:gip} also subsumes the following fused lasso problem \cite{friedman2007pathwise}, or $\ell_2$ total variation regularization problem:
\begin{equation}
\label{prob:flsa}
\min_{x\in \Re^n}  \; \frac{1}{2} \sum_{i=1}^{n} (x_i - y_i)^2 + \lambda \sum_{i=1}^{n-1}|x_i - x_{i+1}|.
\end{equation}
By allowing the parameters $\lambda_i$ and/or $\mu_i$ in  \eqref{eq:gip}  to be positive infinite, we see that the generalized nearly isotonic optimization model \eqref{eq:gip} is closely related to {the} shape constrained inference problems \cite{Silvapulle2005constrained}. 
For instance, the important isotonic regression \cite{Ayer1955emprical,Bartholomew1959bio,bartholomew1959test,Bartholomew1972statistic,Brunk1955maximum,Ahuja2001fast} and  unimodal regression problems \cite{Quentin2008unimodal,Frisen1986unimodal} are special cases of model \eqref{eq:gip}. Indeed, given a positive weight vector $w\in\Re^n$ and some $1\le p < +\infty$, in problem \eqref{eq:gip}, if by taking 
\[ f_i(x_i) = w_i|x_i - y_i|^p, \quad i=1,\ldots,n,\]
 and $\lambda_i = +\infty$ and $\mu_i = 0$ for $i=1,\ldots, n-1$, we obtain the following $\ell_p$ isotonic regression problem:
\begin{equation}
\label{prob:iso_reg}
\begin{aligned}
& \min_{x\in \Re^n} \quad  \sum_{i=1}^{n} w_i|x_i - y_i|^p  \\
& \quad \text{s.t.}  \quad x_i \le x_{i+1}, \quad i = 1, \cdots,n-1.
\end{aligned} 
\end{equation}
In the above setting, if for some $1 \le m \le n-2$, by taking 
\begin{align*}
\lambda_{i} = +\infty, \, \mu_i = 0, \quad i=1,\ldots,m, \quad \mbox{ and } \, \lambda_{i} =0, \, \mu_i = +\infty, \, i=m+1,\ldots,n-1,
\end{align*}
in model \eqref{eq:gip},
we obtain the following unimodal regression problem:
\begin{equation}
\label{prob:uni_reg}
\begin{aligned}
\min_{x\in \Re^n} &\quad \sum_{i=1}^{n} w_i|x_i - y_i|^p  \\
\quad \text{s.t.}  &\quad x_i \le x_{i+1}, \quad i = 1,\cdots,m, \\
&\quad x_i \ge x_{i+1},  \quad i = m+1,\cdots,n-1.
\end{aligned} 
\end{equation}
These related problems indicate wide applicability of our model \eqref{eq:gip} in many fields including operations research \cite{Ahuja2001fast}, signal processing \cite{xiaojun2016videocut,Alfred1993localiso},   medical prognosis \cite{Ryu2004medical},  and traffic and climate data analysis \cite{Matyasovszky2013climate,Wu2015traffic}.
Perhaps the closest model to our generalized isotonic optimization problem is the Generalized Isotonic Median Regression (GIMR) model studied in \cite{Hochbaum2017faster}. While we allow loss functions $f_i$ in problem \eqref{eq:gip} to be general convex functions, the GIMR model in \cite{Hochbaum2017faster} assumes that each $f_i$ is piecewise affine. Besides, our model \eqref{eq:gip} deals with the positive infinite parameters $\lambda_i$ and/or $\mu_i$ more explicitly. We further note that in model \eqref{eq:gip} we do not consider box constraints for each decision variable $x_i$ as in the GIMR, since many important instances of model \eqref{eq:gip}, e.g., problems \eqref{prob:niso}, \eqref{prob:flsa}, \eqref{prob:iso_reg} and \eqref{prob:uni_reg}, contain no box constraints. Nevertheless, as one can observe later, our analysis can be generalized to the case where additional box constraints of $x_i$ are presented without much difficulty.

Now we present a brief review on available algorithms for solving the aforementioned related models. 
For various shape constrained statistical regression problems including problems \eqref{prob:iso_reg} and \eqref{prob:uni_reg}, 
a widely used and efficient algorithm is the pool adjacent violators algorithm (PAVA) \cite{Ayer1955emprical,Bartholomew1972statistic}. The PAVA was originally proposed for solving the $\ell_2$ isotonic regression problem, i.e., problem \eqref{prob:iso_reg} with $p = 2$. In \cite{Best1990active}, Best and Chakravarti proved that the PAVA, when applied to the $\ell_2$ isotonic regression problem, is in fact a dual feasible active set method. Later, the PAVA was further extended to handle isotonic regression problems with general separable convex objective in \cite{Stromberg1991algorithm,Best2000minimizing,Ahuja2001fast}. Moreover, the PAVA was generalized to solve the $\ell_p$ unimodal regression \cite{Quentin2008unimodal} with emphasis on the $\ell_1$, $\ell_2$ and $\ell_\infty$ cases. Recently, Yu and Xing \cite{Yu2016exact} proposed a generalized PAVA for solving separable convex minimization problems with rooted tree order constraints. Note that the PAVA can also be generalized to handle the nearly isotonic regularizer. In \cite{Tibshirani2016nearly}, Tibshirani et al. developed an algorithm, which can be viewed as a modified version of the PAVA, for computing the solution path of the nearly isotonic regression problem \eqref{prob:niso}.  Closely related to the PAVA, a direct algorithm for solving $\ell_2$ total variation regularization \eqref{prob:flsa} was proposed in \cite{Condat2013direct} by Condat, which appears to be {one of the fastest algorithms} for solving \eqref{prob:flsa}. Despite the wide applicability of the PAVA, it remains largely unknown whether the algorithm can be modified to solve the general GNIO problem \eqref{eq:gip}.

Viewed as a special case of the Markov Random Fields (MRF) problem \cite{Hochbaum2001efficient}, the GIMR model \cite{Hochbaum2017faster} was efficiently solved by emphasizing the piecewise affine structures of the loss functions $f_i$ and carefully adopting the cut-derived threshold theorem of Hochbaum's MRF algorithm \cite[Theorem 3.1]{Hochbaum2001efficient}. However, it is not clear
whether the proposed algorithm in \cite{Hochbaum2017faster} can be extended to handle general convex loss functions in
the GNIO problem \eqref{eq:gip}. Quite recently, as a generalization of the GIMR model, Lu and Hochbaum \cite{Hochbaum2021unified} investigated the 1D generalized total variation problem where in the presence of the box constraints over the decision variables, general real-valued convex loss and regularization functions are considered. Moreover, based on the Karush-Kuhn-Tucker optimality conditions of the 1D generalized total variation problem, an efficient algorithm with a nice complexity was proposed in \cite{Hochbaum2021unified}. However, special attentions are needed  when the algorithm is applied to handle problems consisting extended real-valued regularization functions (e.g., indicator functions), as the subgradient of the regularizer at certain intermediate points of the algorithm may be empty. Hence, it may not be able to properly handle problem \eqref{eq:gip} when some parameters $\lambda_i$ and/or $\mu_i$ are positive infinite.

 On the other hand, based on the inherit recursion structures of the underlying problems, efficient dynamic programming (DP) approaches \cite{Rote2019isoDP,Johnson2013dplasso} are designed to solve the regularized problem \eqref{prob:flsa} and the constrained problem \eqref{prob:iso_reg} with $p=1$. Inspired by these successes, we ask the following question:
 
 {\it Can we design an efficient DP based algorithm to handle more sophisticated shape restricted statistical regression models than previously considered problems \eqref{prob:flsa} and \eqref{prob:iso_reg} with $p=1$?}
 
In this paper, we provide an affirmative answer to this question. In fact, our careful examination of the algorithms in \cite{Rote2019isoDP,Johnson2013dplasso} reveals great potential of the dynamic programming approach for handling general convex loss functions and various order restrictions as regularizers and/or constraints which are  emphasized in model \eqref{eq:gip}. 
Particularly, we propose the first efficient and implementable dynamic programming algorithm for solving the general model \eqref{eq:gip}. 
By digging into the recursion nature of \eqref{eq:gip}, we start by reformulating problem \eqref{eq:gip} into an equivalent form consisting a series of recursive optimization subproblems, which are suitable for the design of a dynamic programming approach.
 Unfortunately, the involved objective functions are also recursively defined and their definitions require solving infinitely many optimization problems\footnote{See \eqref{eq:hxi} for more details.}. Therefore, the naive extension of the dynamic programming approaches in \cite{Rote2019isoDP,Johnson2013dplasso} will not result a computationally tractable algorithm for solving \eqref{eq:gip}.
Here, we overcome this difficulty by utilizing the special properties of the generalized nearly isotonic regularizer in \eqref{eq:gip} to provide explicit updating formulas for the objective functions, as well as optimal solutions, of the involved subproblems. Moreover, the computations associated with each of the aforementioned formulas only involve solving at most two univariate convex optimization problems. These formulas also lead to a semi-closed formula, in a recursive fashion, of an optimal solution to \eqref{eq:gip}.
As an illustration, the implementation details and the optimal ${\cal O}(n)$ running time of our algorithm for solving $\ell_2$-GNIO problems\footnote{Its definition can be found at the beginning of Section \ref{sec:imple_quadratic}.} are discussed. We also conduct extensive numerical experiments to  demonstrate the robustness, as well as effectiveness, of our algorithm for handling different generalized nearly isotonic optimization problems.

The remaining parts of this paper are organized as follows. In the next section, we provide some discussions on the subdifferential mappings of univariate convex functions. The obtained results will be used later for designing and analyzing our algorithm. In Section 3, a dynamic programming approach is proposed for solving problem \eqref{eq:gip}. The explicit updating rules for objectives and optimal solutions of the involved optimization subproblems are also derived in this section.
In Section 4, we discuss some practical implementation issues and conduct running time analysis of our algorithm for solving $\ell_2$-GNIO problems. Numerical experiments are presented in Section 5. We conclude our paper in the final section.

\section{Preliminaries}
\label{sec:preliminary}
In this section, we discuss some basic properties associated with the subdifferential mappings of univariate convex functions. These properties will be extensively used in our algorithmic design and analysis.

Let $f:\Re \to \Re$ be any given function. We define the corresponding left derivative $f'_-$ and right derivative $f'_+$ at a given point $x$ in the following way:
\begin{equation*}
\label{def:lfrf}
f'_-(x) = \lim_{z\,\uparrow\,x} \frac{f(z) - f(x)}{z - x}, \quad 
f'_+(x) = \lim_{z\,\downarrow\,x} \frac{f(z) - f(x)}{z - x},
\end{equation*}
if the limits exist.
Of course, if $f$ is actually differentiable at $x$, then $f'_-(x) = f'_+(x) = f'(x)$. If $f$ is assumed to be a convex function, one can say more about $f'_+$ and $f'_-$ and their relations with the subgradient mapping $\partial f$. We summarize these properties in the following lemma which are mainly taken from \cite[Theorems 23.1, 24.1]{rockafellar1970convex}. Hence, the proofs are omitted here.
\begin{lemma}
	\label{lem:lfrf}
	Let $f:\Re \to \Re$ be a convex function. Then, $f'_+$ and $f'_-$ are well-defined finite-valued non-decreasing functions on $\Re$,  such that
	\begin{equation}
	\label{eq:nondlfrf}
	f_+'(z_1) \le f_-'(x)\le f_+'(x)\le f_-'(z_2) \quad \mbox{ whenever }\quad z_1 < x < z_2.
	\end{equation}
	For every $x$, one has
	\begin{subequations}
	\begin{align}
	& f_-'(x) = \sup_{z < x} \frac{f(z) - f(x)}{z - x}, \quad f_+'(x) = \inf_{z > x} \frac{f(z) - f(x)}{z - x}, \label{eq:lfrfa}
	\\[2pt]
	& f_-'(x) = \lim_{z \, \uparrow \, x} f_-'(z)= \lim_{z \, \uparrow \, x} f'_+(z), \quad f_+'(x) = \lim_{z \, \downarrow \, x} f'_+(z) = \lim_{z \, \downarrow \, x} f'_-(z). \label{eq:lfrfb}
	\end{align}
	\end{subequations}
	In addition, it holds that 
	\begin{equation}
	\label{eq:pg}
	\partial f(x) = \left\{d\in\Re \mid f'_-(x)\le d \le f'_+(x) \right\}, \quad \forall \, x\in\Re.
	\end{equation}
\end{lemma}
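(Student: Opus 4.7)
My plan is to derive everything from the three-chord (three-slope) inequality for a convex function: whenever $u < v < w$,
$$\frac{f(v) - f(u)}{v - u} \;\le\; \frac{f(w) - f(u)}{w - u} \;\le\; \frac{f(w) - f(v)}{w - v},$$
which I would obtain immediately by writing $v$ as a convex combination of $u$ and $w$ and invoking convexity of $f$. Fixing $x$ and setting $\phi_x(z) := (f(z) - f(x))/(z - x)$, this inequality tells me that $\phi_x$ is non-decreasing on each of $(-\infty,x)$ and $(x,+\infty)$, and that every value of $\phi_x$ to the left of $x$ is bounded above by every value to the right. Existence and finiteness of the one-sided limits then follow as monotone limits between such bounds, and the sup/inf formulas \eqref{eq:lfrfa} are immediate: $f'_-(x) = \sup_{z<x}\phi_x(z)$ and $f'_+(x) = \inf_{z>x}\phi_x(z)$.

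For the monotonicity chain \eqref{eq:nondlfrf}, I would pick $z_1 < x < z_2$ and sandwich $f'_-(x)$, $f'_+(x)$ between chord slopes at $z_1$ and $z_2$ via the three-chord inequality, obtaining
$$f'_+(z_1) \;\le\; \phi_x(z_1) \;\le\; f'_-(x) \;\le\; f'_+(x) \;\le\; \phi_x(z_2) \;\le\; f'_-(z_2).$$
This simultaneously yields $f'_-(x) \le f'_+(x)$ and the fact that both $f'_-$ and $f'_+$ are non-decreasing on $\Re$.

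The most delicate step will be the one-sided continuity identities \eqref{eq:lfrfb}, since I need $f'_-$ and $f'_+$ to agree in the one-sided limits even though they may be distinct pointwise. For $f'_+(x) = \lim_{z \downarrow x} f'_+(z)$, the monotonicity from \eqref{eq:nondlfrf} already gives $\ge$. For the reverse, given $\varepsilon > 0$ I would use \eqref{eq:lfrfa} to pick $y > x$ with $\phi_x(y) < f'_+(x) + \varepsilon$, and then, for any $z \in (x,y)$, apply the three-chord inequality on $(x,z,y)$ to get $f'_+(z) \le (f(y) - f(z))/(y - z)$. Letting $z \downarrow x$ and using continuity of $f$ on the open interval where it is real-valued (a standard consequence of convexity) produces $\lim_{z \downarrow x} f'_+(z) \le \phi_x(y) < f'_+(x) + \varepsilon$. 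The identity $f'_+(x) = \lim_{z \downarrow x} f'_-(z)$ follows at once since $f'_-(z) \le f'_+(z)$ but also $f'_-(z) \ge \phi_x(z)$, which tends to $f'_+(x)$ from the right by \eqref{eq:lfrfa}. The two $\uparrow$-statements are handled by symmetric arguments.

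Finally, for the subgradient characterization \eqref{eq:pg}, I would use that $d \in \partial f(x)$ means $f(y) \ge f(x) + d(y - x)$ for every $y\in\Re$. Rearranging with $y > x$ gives $d \le \phi_x(y)$; taking the infimum over $y > x$ and invoking \eqref{eq:lfrfa} yields $d \le f'_+(x)$, and $y < x$ analogously gives $d \ge f'_-(x)$. Conversely, any $d \in [f'_-(x), f'_+(x)]$ satisfies the subgradient inequality: on each side of $x$, $\phi_x$ dominates $d$ by \eqref{eq:lfrfa}, so $f(y) - f(x) \ge d(y-x)$ for all $y$. The main obstacle throughout is really only the one-sided limit identities in \eqref{eq:lfrfb}; the remaining pieces reduce cleanly to the three-chord inequality and careful bookkeeping with monotone quotients.
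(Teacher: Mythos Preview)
Your argument is correct. The paper, however, does not actually prove this lemma: it simply records the statement and cites Rockafellar's \emph{Convex Analysis} (Theorems~23.1 and 24.1), explicitly omitting the proof. So there is no ``paper's own proof'' to compare against beyond that reference.

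That said, your self-contained derivation via the three-chord inequality is exactly the classical route taken in Rockafellar's text, so in spirit you are reproducing the cited argument rather than doing something new. The monotonicity of the difference quotient $\phi_x$, the sup/inf identification of the one-sided derivatives, the chain \eqref{eq:nondlfrf}, and the subdifferential formula all fall out cleanly as you describe. Your handling of the one-sided limit identities \eqref{eq:lfrfb} is also sound; the appeal to continuity of $f$ (automatic for a real-valued convex function on $\Re$) to pass the chord slope to its limit is the only place one must be slightly careful, and you flag it appropriately. What your write-up buys over the bare citation is a short, elementary, and fully self-contained account that does not require the reader to consult Rockafellar.
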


Given a convex function $f:\Re \to \Re$, the left or right continuity of $f'_-$ and $f'_+$ derived in Lemma \ref{lem:lfrf}, together with their non-decreasing properties, imply that certain intermediate value {theorem} holds for the subgradient mapping $\partial f$. 
\begin{lemma}

	\label{lem:intermediate}
	Let $f:\Re \to \Re$ be a convex function. 
	Given an interval $[x_1,x_2]\subseteq \Re$, for any $\alpha \in [f'_-(x_1), f'_+(x_2)]$, there exists $c\in \Re$ such that $f_-'(c)\le \alpha \le f'_+(c)$, i.e., $\alpha \in \partial f(c)$. In fact, for the given $\alpha$, a particular choice is
	\[ c = \sup\left\{ z\in [x_1,x_2]\mid f'_-(z) \le \alpha \right\}. \] 
\end{lemma}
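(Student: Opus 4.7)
The plan is to verify that the explicit choice $c = \sup\{z \in [x_1,x_2] \mid f'_-(z) \le \alpha\}$ indeed satisfies $f'_-(c) \le \alpha \le f'_+(c)$, which via \eqref{eq:pg} gives $\alpha \in \partial f(c)$. The two inequalities will be established separately by exploiting the one-sided continuity and monotonicity of $f'_-$ and $f'_+$ collected in Lemma \ref{lem:lfrf}.

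First I would observe that the set $S := \{z \in [x_1,x_2] \mid f'_-(z) \le \alpha\}$ is non-empty, since $x_1 \in S$ by the hypothesis $\alpha \ge f'_-(x_1)$, and it is bounded above by $x_2$. Hence the supremum $c$ exists and lies in $[x_1,x_2]$.

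Next I would prove $f'_-(c) \le \alpha$. If $c = x_1$, this is immediate from the assumption. Otherwise, by the definition of supremum there is an increasing sequence $z_k \in S$ with $z_k \uparrow c$; by the monotonicity of $f'_-$ in \eqref{eq:nondlfrf} and then the left-continuity in \eqref{eq:lfrfb}, $f'_-(c) = \lim_{z \uparrow c} f'_-(z) \le \alpha$. Then I would prove $\alpha \le f'_+(c)$. If $c = x_2$, this follows directly from $\alpha \le f'_+(x_2)$. Otherwise, any $z \in (c,x_2]$ fails to lie in $S$, so $f'_-(z) > \alpha$; letting $z \downarrow c$ and using the right-continuity expression $f'_+(c) = \lim_{z \downarrow c} f'_-(z)$ from \eqref{eq:lfrfb} yields $f'_+(c) \ge \alpha$.

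Combining the two estimates gives $f'_-(c) \le \alpha \le f'_+(c)$, which by \eqref{eq:pg} is precisely $\alpha \in \partial f(c)$. I do not anticipate any serious obstacle; the only subtlety is treating the boundary cases $c = x_1$ and $c = x_2$ separately so that the one-sided limit formulas in \eqref{eq:lfrfb} are applied only where they are valid (i.e.\ when $c$ is actually approached from the proper side within $[x_1,x_2]$).
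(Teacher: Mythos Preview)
Your argument is correct, including the careful handling of the boundary cases $c=x_1$ and $c=x_2$. It proceeds by a genuinely different route from the paper. The paper argues by contradiction via maximal monotonicity: assuming $\alpha\notin\partial f(c)$, it adjoins the single pair $(c,\alpha)$ to the graph of $\partial f$, observes that the enlarged operator is still monotone, and then invokes the maximal monotonicity of $\partial f$ to reach a contradiction. Your proof instead stays entirely within the elementary one-sided continuity and monotonicity statements of Lemma~\ref{lem:lfrf}, deriving $f'_-(c)\le\alpha$ and $\alpha\le f'_+(c)$ directly. This is more self-contained and makes explicit the estimates that the paper's proof hides behind the phrase ``it is not difficult to verify that $T$ is monotone'' (verifying that monotonicity in fact requires essentially the same inequalities you establish). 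One minor refinement: when $c>x_1$, the existence of a sequence $z_k\in S$ with $z_k<c$ and $z_k\uparrow c$ is cleanest to justify by first disposing of the trivial case $c\in S$ (where $f'_-(c)\le\alpha$ holds by definition), after which such a sequence is forced by the definition of the supremum.
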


\begin{proof}
	Let $S = \left\{ z\in [x_1,x_2]\mid f'_-(z) \le \alpha \right\}$.
	Since $\alpha \ge f'_-(x_1)$, we know that $x_1 \in S$, i.e., $S$ is nonempty. Meanwhile, since $S$ is also upper bounded, by the completeness of the real numbers, we know that $c = \sup S$ exists and $x_1 \le c \le x_2$. 
	
	Assume on the contrary that $\alpha \not \in \partial f(c)$. Then, we can construct an operator $T$ through its graph 
	\[
	{\rm graph}\, T = {\rm graph}\, \partial f	\cup \{(c,\alpha)\}.
	\]
	It is not difficult to verify that $T$ is monotone.  Since $\partial f$ is maximal monotone \cite{rockafellar1970convex}, it holds that $T = \partial f$. We arrive at a contradiction and complete the proof of the lemma.
\end{proof}
\begin{corollary}\label{cor:intermediate}
Let $f:\Re \to \Re$ be a convex function and suppose that $\inf_x f'_-(x) < \sup_x f'_+(x)$. For any given $\alpha \in (\inf_x f'_-(x),\, \sup_x f'_+(x) )$, there exists $c\in \Re$ such that $\alpha \in \partial f(c)$ and a particular choice is \[c = \sup\left\{ z\in \Re\mid f'_-(z) \le \alpha \right\}.\]
\end{corollary}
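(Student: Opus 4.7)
The plan is to reduce Corollary~\ref{cor:intermediate} to the already-proved Lemma~\ref{lem:intermediate} by carving out a suitable bounded interval on which the latter directly applies.

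First, I would use the strict inequalities $\inf_x f'_-(x) < \alpha < \sup_x f'_+(x)$ to select $x_1, x_2 \in \Re$ with $f'_-(x_1) < \alpha < f'_+(x_2)$. A short argument via the monotonicity relation \eqref{eq:nondlfrf} forces $x_1 \le x_2$: if instead $x_2 < x_1$, then \eqref{eq:nondlfrf} yields $f'_+(x_2) \le f'_-(x_1)$, contradicting the strict chain $f'_-(x_1) < \alpha < f'_+(x_2)$. Consequently $\alpha \in [f'_-(x_1), f'_+(x_2)]$, and applying Lemma~\ref{lem:intermediate} to the interval $[x_1, x_2]$ produces a point $c^{\ast} = \sup\{z \in [x_1, x_2] \mid f'_-(z) \le \alpha\}$ with $\alpha \in \partial f(c^{\ast})$.

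Next, I would verify that this local supremum coincides with the global one claimed by the corollary, namely $c = \sup\{z \in \Re \mid f'_-(z) \le \alpha\}$. For the lower estimate, $x_1$ itself lies in the global set, so $c \ge x_1$. For the upper estimate, any $z > x_2$ satisfies $f'_-(z) \ge f'_+(x_2) > \alpha$ by \eqref{eq:nondlfrf}, so no such $z$ belongs to the global set, giving $c \le x_2$. Hence the global set and its restriction to $[x_1, x_2]$ share the same supremum, so $c = c^{\ast}$ and $\alpha \in \partial f(c)$, as required.

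The argument is largely bookkeeping; the only genuine step is the extraction of $x_1, x_2$ with the correct ordering from the hypotheses, and this is immediate from \eqref{eq:nondlfrf}. Once the bounded interval $[x_1, x_2]$ is in hand, the remainder follows from monotonicity of $f'_-, f'_+$ together with a direct appeal to Lemma~\ref{lem:intermediate}, so I do not anticipate any substantive obstacle.
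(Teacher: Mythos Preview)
Your proposal is correct and follows essentially the same route as the paper: select $x_1, x_2$ with $f'_-(x_1) < \alpha < f'_+(x_2)$, observe that the global supremum $\sup\{z\in\Re \mid f'_-(z)\le\alpha\}$ coincides with the restricted one over $[x_1,x_2]$, and invoke Lemma~\ref{lem:intermediate}. The paper states the equality of the two suprema as a fact without justification, whereas you spell out the upper and lower estimates explicitly; otherwise the arguments are the same.
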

\begin{proof}
	Since $\inf_x f'_-(x) < \alpha < \sup_x f'_+(x)$, there exist $x_1, x_2\in \Re$ such that $x_1 < x_2$ and $ -\infty < f'_-(x_1) < \alpha < f'_+(x_2) < +\infty$. The desired result then follows from  the fact that
	\[\sup\left\{ z\in \Re\mid f'_-(z) \le \alpha \right\} = \sup \left\{ z\in [x_1,x_2]\mid f'_-(z) \le \alpha \right\}\] and Lemma \ref{lem:intermediate}.
\end{proof}

\begin{proposition}
	\label{prop:optset}
Let $f:\Re \to \Re$ be a convex function and suppose that $\inf_x f'_-(x) < \sup_x f'_+(x)$. Given $\lambda\in \Re$, let $\Omega(\lambda)$ be the optimal solution set to the following problem:
\[
\min_{x\in\Re} \displaystyle  \left\{ f(x) - \lambda x \right\} .
\]
Then, $\Omega(\lambda)$ is nonempty if and only if either one of the following two conditions holds:
\begin{enumerate}
	\item there exists $\bar x\in \Re$ such that $f_-'(\bar x) = \inf_x f'_-(x) = \lambda$ or $f_+'(\bar x) = \sup_x f'_+(x) = \lambda$;
	\item $\inf_x f'_-(x) < \lambda < \sup_x f'_+(x)$.
\end{enumerate}
\end{proposition}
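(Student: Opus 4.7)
My plan is to translate the statement into a condition about the subgradient of $f$ and then to handle the two directions separately. Let $g(x) := f(x) - \lambda x$. Since $g$ is finite-valued and convex, hence continuous, $x^\ast \in \Omega(\lambda)$ if and only if $0 \in \partial g(x^\ast) = \partial f(x^\ast) - \lambda$. Combined with the characterization \eqref{eq:pg} from Lemma~\ref{lem:lfrf}, this reduces the question of nonemptiness of $\Omega(\lambda)$ to the existence of some $x^\ast \in \Re$ satisfying
\[
f'_-(x^\ast) \le \lambda \le f'_+(x^\ast).
\]
I will use this characterization throughout.

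For the \emph{if} direction, condition 2 is handled directly by Corollary~\ref{cor:intermediate}: if $\inf_x f'_-(x) < \lambda < \sup_x f'_+(x)$, then there is some $c$ with $\lambda \in \partial f(c)$, so $c \in \Omega(\lambda)$. For condition 1, suppose $\bar x$ satisfies $f'_-(\bar x) = \inf_x f'_-(x) = \lambda$. By the monotonicity of $f'_-$ from Lemma~\ref{lem:lfrf}, $f'_-(\bar x) \le f'_+(\bar x)$, so $\lambda = f'_-(\bar x) \le f'_+(\bar x)$ and hence $\lambda \in \partial f(\bar x)$. The other possibility $f'_+(\bar x) = \sup_x f'_+(x) = \lambda$ is symmetric, again yielding $\lambda \in \partial f(\bar x)$.

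For the \emph{only if} direction, fix $x^\ast \in \Omega(\lambda)$; then $f'_-(x^\ast) \le \lambda \le f'_+(x^\ast)$. Taking infimum and supremum over $x$ on the two outer sides gives $\inf_x f'_-(x) \le \lambda \le \sup_x f'_+(x)$. If both inequalities are strict, condition 2 holds. Otherwise, squeezing arguments apply: if $\lambda = \inf_x f'_-(x)$, then $f'_-(x^\ast) \le \lambda = \inf_x f'_-(x) \le f'_-(x^\ast)$ forces $f'_-(x^\ast) = \inf_x f'_-(x) = \lambda$, so $\bar x = x^\ast$ verifies the first alternative of condition 1; if $\lambda = \sup_x f'_+(x)$, the symmetric squeeze yields $f'_+(x^\ast) = \sup_x f'_+(x) = \lambda$, verifying the second alternative. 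Note that the hypothesis $\inf_x f'_-(x) < \sup_x f'_+(x)$ rules out the degenerate situation in which $\lambda$ would have to equal both endpoints simultaneously, but is not otherwise needed in this direction.

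I do not foresee a genuine obstacle here; the work is bookkeeping across the boundary cases where $\lambda$ coincides with $\inf_x f'_-(x)$ or $\sup_x f'_+(x)$ (including the possibility that one of these extrema is $\pm\infty$, in which case the corresponding strict inequality in condition 2 is automatic). The real content is isolated in Corollary~\ref{cor:intermediate}, which already gives the intermediate value property for $\partial f$ on the open interval.
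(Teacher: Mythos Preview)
Your proposal is correct and follows essentially the same approach as the paper: both reduce nonemptiness of $\Omega(\lambda)$ to the subgradient characterization $f'_-(x^\ast)\le\lambda\le f'_+(x^\ast)$, invoke Corollary~\ref{cor:intermediate} for condition~2, and use $f'_-(\bar x)\le f'_+(\bar x)$ for condition~1. The only cosmetic difference is that the paper argues the ``only if'' direction by contrapositive in a single sentence, whereas you spell out the direct squeezing argument more carefully.
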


\begin{proof}
Observe that $\Omega(\lambda) = \left\{x\in \Re \mid \lambda\in \partial f(x) \right\} = \left\{x\in\Re\mid f_-'(x) \le \lambda \le f_+'(x) \right\}$. Thus, we see that if none of the above two conditions holds, then $\Omega(\lambda)$ is an empty set, i.e., the ``only if'' part is proved.	

Next, we focus on the ``if'' part. Suppose that condition 1 holds. Then, $\lambda \in \partial f(\bar x)$, i.e., $\bar x \in \Omega(\lambda)$. Meanwhile, if condition 2 holds, by Corollary \ref{cor:intermediate}, we know that there exists $c\in \Re$ such that $\lambda \in \partial f(c)$, i.e., $c\in \Omega(\lambda)$. Therefore, in both cases, we know that $\Omega(\lambda) \neq  \emptyset$. We thus complete the proof. 
\end{proof}

Let $f:\Re \to \Re$ be a convex function. Denote by $f0^+$ the recession function of $f$. We summarize in the following lemma some useful properties associated with $f0^+$. The proofs can be founded in \cite[Propositions 3.2.1, 3.2.4, 3.2.8]{hiriart2004fundamentals}.

\begin{lemma}\label{lem:recessf}
	Let $f,g$ be two real-valued convex functions over $\Re$. 
	It holds that 
	\begin{itemize}
		\item 	for any $d\in\Re$, $(f0^+)(d) = \lim_{t\to +\infty} (f(x_0 + t d) - f(x_0))/t,$
		where $x_0 \in \Re$ is arbitrary;
		\item $f$ has bounded level-sets if and only if $(f0^+)(d) > 0$ for all $d\neq 0$;
		\item $(f+g)0^+ = f0^+ + g0^+$.
	\end{itemize}
\end{lemma}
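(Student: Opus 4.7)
The plan is to establish each of the three bullets as a consequence of the fundamental monotonicity of difference quotients of convex functions. Fix any $x_0\in\Re$ and any direction $d\in\Re$, and consider the function $\varphi_d(t) = (f(x_0+td)-f(x_0))/t$ for $t>0$. The three–slope (or chord–slope) inequality for the convex function $s\mapsto f(x_0+sd)$ implies that $\varphi_d$ is non‑decreasing in $t$. Consequently $\lim_{t\to+\infty}\varphi_d(t)$ exists in $(-\infty,+\infty]$; I would then call this limit $F(x_0,d)$ and establish independence of $x_0$ by showing that, for any other basepoint $y_0$, the difference $(f(x_0+td)-f(y_0+td))/t$ tends to $0$ as $t\to+\infty$. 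This last step uses that a finite convex function on $\Re$ is locally Lipschitz, so along the affine direction $d$ the dependence on the basepoint is absorbed by the $1/t$ factor. Matching $F(x_0,d)$ with the standard definition of $f0^+$ (for instance via $\lim_{t\to+\infty}f(x_0+td)/t$, which differs by the vanishing term $f(x_0)/t$) then yields the first bullet.

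For the second bullet I would argue the two implications separately. If $(f0^+)(d)\le 0$ for some $d\neq 0$, then by the monotonicity of $\varphi_d$ the values $f(x_0+td)$ stay bounded above by $f(x_0)$ for all $t\ge 0$, so the sublevel set at height $f(x_0)$ contains the ray $\{x_0+td:t\ge 0\}$ and is unbounded. Conversely, if some sublevel set $\{f\le\alpha\}$ is unbounded, pick $x_n$ in it with $\|x_n\|\to\infty$; by compactness of the unit sphere a subsequence of $d_n:=(x_n-x_0)/\|x_n-x_0\|$ converges to some $d$ with $\|d\|=1$. For any fixed $t>0$, once $\|x_n-x_0\|\ge t$, convexity gives $f(x_0+td_n)\le (1-t/\|x_n-x_0\|)f(x_0)+(t/\|x_n-x_0\|)f(x_n)$, whose limit superior is $f(x_0)$; continuity of $f$ then yields $f(x_0+td)\le f(x_0)$, i.e.\ $\varphi_d(t)\le 0$ for every $t>0$, hence $(f0^+)(d)\le 0$. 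I expect this direction to be the main obstacle, since it requires the compactness/continuity argument and careful use of convexity to push information from the discrete sequence $x_n$ to the radial limit defining $f0^+$.

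The third bullet follows almost immediately from the limit formula in the first bullet: for any $x_0$ and $d$,
\[
((f+g)0^+)(d) = \lim_{t\to+\infty}\frac{(f+g)(x_0+td)-(f+g)(x_0)}{t},
\]
and since both summand limits exist in $(-\infty,+\infty]$ and neither can be $-\infty$ (as $\varphi_d$ is bounded below by its value at any fixed $t>0$), the limit splits to give $(f0^+)(d)+(g0^+)(d)$. As a routine check one verifies the addition is well defined in the extended sense. Together the three bullets give the claim, and the only nontrivial ingredient throughout is the monotonicity of chord slopes, from which everything else is extracted.
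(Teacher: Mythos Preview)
The paper does not actually prove this lemma; it simply refers to Propositions 3.2.1, 3.2.4 and 3.2.8 in Hiriart-Urruty and Lemar\'echal. Your direct argument therefore goes beyond what the paper offers, and your treatment of the second and third bullets is sound (in one dimension the ``unit sphere'' is $\{-1,1\}$, so the compactness step is trivial).

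There is, however, a genuine gap in your argument for the first bullet. You claim that $(f(x_0+td)-f(y_0+td))/t \to 0$ by local Lipschitz continuity, but this is false in general: take $f(x)=x^2$, $d=1$, $x_0=0$, $y_0=1$; then $(f(t)-f(t+1))/t = -(2t+1)/t \to -2 \ne 0$. Local Lipschitzness gives no uniform control here because the points $x_0+td$ and $y_0+td$ escape every compact set as $t\to\infty$.

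The fix in one dimension is straightforward. If $d\ne 0$ then $y_0 = x_0 + s_0 d$ for some $s_0\in\Re$, and
\[
\frac{f(y_0+td)-f(y_0)}{t} \;=\; \frac{f(x_0+(t+s_0)d)-f(x_0)}{t+s_0}\cdot\frac{t+s_0}{t} \;-\; \frac{f(x_0+s_0 d)-f(x_0)}{t}.
\]
Letting $t\to\infty$, the first factor tends to $F(x_0,d)$ by your monotone-quotient argument (with the substitution $t'=t+s_0$), the second factor tends to $1$, and the last term tends to $0$; hence $F(y_0,d)=F(x_0,d)$, valid also when the limit is $+\infty$. With this correction your proof goes through.
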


\section{A dynamic programming algorithm for solving (\ref{eq:gip})}
\label{sec3:dynamic}
In this section, we shall develop a dynamic programming algorithm for solving the generalized nearly isotonic optimization problem \eqref{eq:gip}. Inspired by similar ideas explored in \cite{Johnson2013dplasso,Rote2019isoDP}, we uncover the recursion nature of problem \eqref{eq:gip} and reformulate it into a form which is suitable for dynamic programming approaches.

Let $h_1(x_1) = 0$ for all $x_1\in\Re$ and for $i=2,\ldots, n$,  define recursively functions $h_i$ by
\begin{equation}
\label{eq:hxi}
 h_i(x_i) 
:=  \min_{x_{i-1} \in \Re} \left\{ \begin{aligned}
& f_{i-1}(x_{i-1}) + h_{i-1}(x_{i-1})  \\ 
& + \lambda_{i-1}(x_{i-1} - x_i)_+ + \mu_{i-1}(x_i - x_{i-1})_+
\end{aligned} 
\right\}, \, \forall \, x_i \in \Re.
\end{equation}
Then, it holds for any $2\le i \le n$  that
\begin{equation}
\label{eq:fihi}
\begin{aligned}
 &\min_{x_i \in \Re} \left\{ f_i(x_i) + h_i(x_i) \right\} \\[5pt] 
= & \min_{x_i\in\Re,\ldots,x_1\in\Re} \left\{\sum_{j=1}^{i} f_j(x_j) + \sum_{j=1}^{i-1}\lambda_j (x_{j} - x_{j+1})_+ + \sum_{j=1}^{i-1} \mu_j (x_{j+1} - x_{j})_+ \right\}.
\end{aligned}
\end{equation}
In particular, the optimal value of problem \eqref{eq:gip} can be obtained in the following way:
\begin{align*}
&\min_{x_1\in \Re,\cdots,x_n\in\Re} \left\{ \sum_{i=1}^{n} f_i(x_i) + \sum_{i=1}^{n-1} \lambda_{i} (x_i - x_{i+1})_{+} + \sum_{i=1}^{n-1} \mu_{i} (x_{i+1} - x_{i})_{+} \right\} \\
= {}&\min_{x_n \in \Re} \left\{ f_n(x_n) + h_n(x_n) \right\}.
\end{align*}
These observations allow us to solve problem \eqref{eq:gip} via solving a series of subproblems involving functions $h_i$.
Indeed, suppose that \[x_n^* \in \argmin_{x_n\in\Re}  \left\{ f_n(x_n) + h_n(x_n) \right\}.\]
For $i = n-1, \ldots, 1$, let $x^*_i$ be recursively defined as an optimal solution to the following problem:
\begin{equation}
\label{eq:backward_xi}
x_i^* \in \argmin_{x_i\in\Re} \left\{
f_i(x_i) + h_i(x_i) + \lambda_i (x_i - x_{i+1}^*)_+ + \mu_i (x_{i+1}^* - x_i)_+
\right\}.
\end{equation}
Based on \eqref{eq:fihi}, one can easily show that $x^* = (x_1^*,\ldots, x_n^*) \in \Re^n$ solves problem \eqref{eq:gip}. For later use, we further define $g_i = f_i + h_i$ for all $i=1,\ldots, n$.

The above observations inspire us to apply the following dynamic programming algorithm for solving problem \eqref{eq:gip}. To express the high-level idea more clearly, we only present the algorithm in the most abstract form here. More details will be revealed in  subsequent discussions.

\begin{algorithm}[H]
\caption{A dynamic programming algorithm for problem \eqref{eq:gip} }
\label{alg:DP}
\begin{algorithmic}[1]
\State {Initialize: $h_1(x) = 0$}
\For {$i = 2: n$}
\State {$g_{i-1} = \text{\bf sum}(h_{i-1},f_{i-1})$}
\State {$(h_{i},b_i^-,b_i^+) = \text{\bf update}(g_{i-1}, \lambda_{i-1}, \mu_{i-1})$}
\EndFor
\State {$g_n = \text{\bf sum}(h_n,f_n)$}
\State {$x_n = \argmin_{x\in\Re} g_n(x)$}
\State {$x_1,\cdots,x_{n-1} = \text{\bf recover}( x_n$,$\{ (b_i^-,b_i^+) \}_{i=2}^{n})$ }
\State {\textbf{Return} $x_1,\cdots,x_{n-1},x_n$ }
\end{algorithmic}
\end{algorithm}

In the $i$-th iteration of the for-loop in the above algorithm, the ``sum'' function computes the summation of $f_{i-1}$ and $h_{i-1}$ to obtain $g_{i-1}$, i.e., $g_{i-1} = h_{i-1} + f_{i-1}$. Based on the definition of $h_i$ in \eqref{eq:hxi}, the ``update'' function computes $h_i$ from $g_{i-1}$. The extra outputs $b_i^-$ and $b_i^+$ from this step will be used to recover the optimal solution in the ``recover'' function which is based on the backward computations in \eqref{eq:backward_xi}. 
Hence, both the ``update'' and the ``recover'' steps involve solving similar optimization problems in the form of \eqref{eq:hxi}. A first glance of the definition of $h_i$ in \eqref{eq:hxi} may lead us to the conclusion that the ``update'' step is intractable. The reason is that one has to compute $h_i$ over all $x_i\in \Re$, i.e., infinitely many optimization problems have to be solved. To alleviate this difficulty, based on a careful exploitation of the special structures of the generalized nearly isotonic regularizer in \eqref{eq:hxi}, we present an explicit formula to compute $h_i$. Specifically, we are able to 
determine $h_i$ by calculating two special breakpoints $b_i^-, b_i^+ \in \Re$ via solving at most two one-dimensional optimization problems. 
Moreover, these breakpoints will also be used in the ``recover'' step. In fact, as one will observe later, an optimal solution to problem \eqref{eq:backward_xi} enjoys a closed-form representation involving $b_i^-$ and $b_i^+$.   
A concrete example on the detailed implementations of these steps will be discussed in Section \ref{sec:imple_quadratic}.

\subsection{An explicit updating formula for ($\ref{eq:hxi}$)}
In this subsection, we study the ``update'' step in Algorithm \ref{alg:DP}. In particular, we show how to obtain an explicit updating formula for $h_i$ defined in \eqref{eq:hxi}. 

We start with a more abstract reformulation of \eqref{eq:hxi}. Given a univariate convex function $g:\Re \to \Re$ and nonnegative and possibly infinite constants $\mu,\lambda$, for any given $y\in \Re$, let 
\begin{equation} \label{def:zS}
p_y(x) = \lambda(x-y)_+ + \mu(y-x)_+ \; \mbox{ and } \;
z_y(x) = g(x) + p_y(x), \quad x\in\Re.
\end{equation}
Here, if for some, $\lambda = +\infty$ (respectively, $\mu = +\infty$), the corresponding regularization term $\lambda(x - y)_+$ (respectively,  $\mu(y - x)_+$) in $p_y$ shall be understood as the indicator function $\delta(x\mid x\le y)$ (respectively, $\delta(x\mid x\ge y)$).
We focus on the optimal value function $h$ defined as follows:
\begin{equation}\label{eq:h}
h(y) := \min_{x\in\Re} z_y(x), \quad y\in \Re. 
\end{equation}
For the well-definedness of $h$ in \eqref{eq:h}, we further assume that $g$ has bounded level-sets, i.e., there exists $\alpha \in \Re$ such that the set $\left\{x\in\Re\mid g(x)\le \alpha \right\}$ is non-empty and bounded. Indeed, under this assumption, it is not difficult to see that $z_y$ is also a convex function and has bounded level-sets. Then, by \cite[Theorems 27.1 and 27.2]{rockafellar1970convex}, we know that
for any $y$, problem $\min_x z_y(x)$ has a non-empty and bounded optimal solution set. Therefore, the optimal value function $h$ is well-defined on $\Re$ with ${\rm dom}\, h = \Re$.

 Since ${\rm dom} \, g = \Re$, we know from the definitions of $z_y$ and $p_y$ in \eqref{def:zS} and \cite[Theorem 23.8]{rockafellar1970convex} that 
\begin{equation}
\label{eq:subggz}
\partial z_y(x) = \partial g(x) + \partial p_y(x), \quad \forall\, x\in\Re.
\end{equation}
Since $g$ has bounded level-sets, it holds from \cite[Theorems 27.1 and 27.2]{rockafellar1970convex} that the optimal solution set to $\min_x g(x)$, i.e., $S = \{u\in\Re\mid 0\in \partial g(u)\}$, is nonempty and bounded. Let $u^*\in S$ be an optimal solution, then, by Lemma \ref{lem:lfrf}, $\inf_x g'_-(x) \le g'_-(u^*)\le 0 \le g'_+(u^*) \le \sup_x g'_+(x)$. We further argue that 
\begin{equation}
\label{eq:gm0gp}
\inf_x g'_-(x) < 0 < \sup_x g'_+(x).
\end{equation} Indeed, if $\inf_x g'_-(x) = 0$, then \[0 = \inf_x g'_-(x) \le g'_-(x) \le g'_-(u^*) \le 0, \quad \forall\,  x\in (-\infty, u^*],\]
i.e., $g'_-(x) = 0$ for all $ x\in (-\infty, u^*]$. Therefore, by Lemma \ref{lem:lfrf}, we know that $0 \in \partial g(x)$ for all $ x\in (-\infty, u^*]$, i.e., $(-\infty, u^*] \subseteq S$. This contradicts to the fact that $S$ is bounded. Hence, it holds that $ \inf_x g'_-(x) < 0$. Similarly, one can show that $\sup_x g'_+(x) > 0$. 

Next, based on observation \eqref{eq:gm0gp}, for any given nonnegative and possibly infinite parameters $\lambda, \mu$, it holds that
$ -\lambda \le 0 < \sup_x g'_+(x)$ and $\mu \ge 0 > \inf_x g'_-(x)$. Now, we define two breakpoints $b^-$ and $b^+$ associated with the function $g$ and parameters $\lambda, \mu$. 
Particularly, we note from \cite[Theorem 23.5]{rockafellar1970convex} that 
\begin{equation}
\label{eq:omega}
\begin{aligned}
\partial g^*(\alpha) = {}& \left\{ x\in \Re\mid 
\alpha \in \partial g(x)
\right\} \\
 ={}& \mbox{the optimal solution set of problem } \min_{z\in\Re}\; \{ g(z) - \alpha z \}.
\end{aligned}
\end{equation}
Define
\begin{equation}
\label{eq:bm}
b^-  \begin{cases}
 = -\infty, \; & \mbox{ if } \lambda = +\infty \mbox{ or } \partial g^*(-\lambda) =  \emptyset, \\[2pt]
 \in \partial g^*(-\lambda), \; &\mbox{ otherwise}, 
\end{cases} 
\end{equation}
and
\begin{equation}
\label{eq:bp}
b^+  \begin{cases}
= +\infty, \; & \mbox{ if } \mu = +\infty \mbox{ or } \partial g^*(\mu) =  \emptyset, \\[2pt]
\in \partial g^*(\mu), \; &\mbox{ otherwise}
\end{cases} 
\end{equation}
with the following special case
\begin{equation}
\label{eq:bpm0}
b ^+ = b^- \in \partial g^*(0), \quad \mbox{if } \lambda = \mu = 0.
\end{equation}
Here, the nonemptiness of $\partial g^*(0)$ follows from \eqref{eq:gm0gp} and Proposition \ref{prop:optset}. {In fact, Proposition \ref{prop:optset} and \eqref{eq:gm0gp} guarantee that there exist parameters $\lambda$ and $\mu$ such that $b^-$ and $b^+$ are finite real numbers.}
Moreover, as one can observe from the above definitions, to determine $b^-$ and $b^+$, we only need to solve at most two one-dimensional optimization problems.
\begin{lemma}
\label{lem:bpbm_relation}
For $b^+$ and $b^-$ defined in \eqref{eq:bm}, \eqref{eq:bp} and \eqref{eq:bpm0}, it holds that $b^- \le b^+$.
\end{lemma}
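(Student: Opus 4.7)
The plan is to split into three disjoint cases according to which of the defining clauses in \eqref{eq:bm}, \eqref{eq:bp}, \eqref{eq:bpm0} are active, and in each case reduce the desired inequality either to a trivial one or to the monotonicity of the subgradient mapping $\partial g$.

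First, I would dispose of the ``infinity'' cases. If $\lambda = +\infty$ or $\partial g^*(-\lambda) = \emptyset$, then $b^- = -\infty$ by \eqref{eq:bm}, so $b^- \le b^+$ holds vacuously; symmetrically, if $\mu = +\infty$ or $\partial g^*(\mu) = \emptyset$, then $b^+ = +\infty$ by \eqref{eq:bp}, and again the inequality is immediate. Next, the degenerate case $\lambda = \mu = 0$ is covered directly by \eqref{eq:bpm0}, which forces $b^- = b^+$.

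The only substantive case left is when $0 \le \lambda,\mu < +\infty$, $\partial g^*(-\lambda)$ and $\partial g^*(\mu)$ are both nonempty, and $(\lambda,\mu) \neq (0,0)$. Here $b^-$ and $b^+$ are finite real numbers with $-\lambda \in \partial g(b^-)$ and $\mu \in \partial g(b^+)$, by the characterization of $\partial g^*$ in \eqref{eq:omega}. Since $\lambda,\mu \ge 0$ and they are not both zero, we have the strict inequality $-\lambda < \mu$. Monotonicity of the subgradient mapping of the convex function $g$ then yields
\[
(b^+ - b^-)(\mu - (-\lambda)) \ge 0,
\]
and because $\mu - (-\lambda) = \lambda + \mu > 0$, we conclude $b^- \le b^+$, as required.

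I do not expect any genuine obstacle here: the argument is essentially a case split followed by one application of monotonicity of $\partial g$. The only delicate point is making sure the ``otherwise'' clause in \eqref{eq:bm}--\eqref{eq:bp} is not silently invoked when $\lambda = \mu = 0$ (in which case \eqref{eq:bpm0} overrides it and selects the \emph{same} element of $\partial g^*(0)$ for $b^-$ and $b^+$); handling this case separately as above avoids the ambiguity.
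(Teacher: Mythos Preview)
Your proof is correct and essentially matches the paper's argument, which simply appeals to ``the definitions of $b^+$ and $b^-$ and the monotonicity of $\partial g^*$.'' Your case split makes explicit what the paper leaves implicit, and your use of the monotonicity of $\partial g$ in the substantive case is equivalent to the paper's appeal to the monotonicity of $\partial g^*$ (since $b^-\in\partial g^*(-\lambda)$, $b^+\in\partial g^*(\mu)$ yields the same inequality $(b^+-b^-)(\mu+\lambda)\ge 0$).
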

\begin{proof}

	The desired result follows directly from the definitions of $b^+$ and $b^-$ and the monotonicity of $\partial g^*$.

\end{proof}

With the above preparations, we have the following theorem which provides an explicit formula for computing $h$.
\begin{theorem}
	\label{thm:h}
Suppose that the convex function $g:\Re \to \Re$ has bounded level-sets. For any $y\in \Re$, $x^*(y) = 
\min (b^+, \max(b^-, y))$
is an optimal solution to $\min_{x\in\Re} z_y(x)$ in \eqref{eq:h} and
\begin{equation}
\label{eq:hupdate}
h(y) =  
\begin{cases}
 g(b^-) + \lambda(b^- - y), \, &{\rm if }\; y< b^-, \\[2pt]
 g(y), \, &{\rm if }\; b^-\le y \le b^+, \\[2pt]
 g(b^+) + \mu(y - b^+), \, &{\rm if }\; y > b^+
\end{cases}
\end{equation}
with the convention that $\left\{ y\in\Re \mid y<-\infty \right\} = \left\{ y\in\Re \mid y > +\infty \right\} =  \emptyset$.
Moreover, $h:\Re \to \Re$ is  a convex function, and for any $y\in\Re$, it holds that
\begin{equation}
\label{eq:hsubg}
\partial h(y) =  
\begin{cases}
\{-\lambda\}, \, &{\rm if }\; y< b^-, \\[2pt]
[-\lambda, g'_{+}(b^-)], \, &{\rm if}\; y= b^-, \\[2pt]
\partial g(y), \, &{\rm if }\; b^-< y < b^+, \\[2pt]
[g'_{-}(b^+), \mu], \, &{\rm if}\; y= b^+, \\[2pt]
\{\mu\}, \, &{\rm if }\; y > b^+.
\end{cases}
\end{equation}
\end{theorem}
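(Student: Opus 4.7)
The plan is to verify that the proposed minimizer $x^*(y) = \min(b^+,\max(b^-,y))$ satisfies the first-order optimality condition $0 \in \partial z_y(x^*(y))$, to read off \eqref{eq:hupdate} by evaluating $z_y$ at $x^*(y)$, and then to derive convexity of $h$ together with the subdifferential formula \eqref{eq:hsubg} directly from the resulting three-piece expression.

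For the optimality step I would combine the sum rule \eqref{eq:subggz} with the elementary computation
\[
\partial p_y(x) = \begin{cases} \{\lambda\}, & x > y, \\ [-\mu,\lambda], & x = y, \\ \{-\mu\}, & x < y, \end{cases}
\]
and then split into three cases driven by the position of $y$ relative to $b^\pm$. If $y < b^-$, then $x^*(y) = b^- > y$, so $\partial p_y(b^-) = \{\lambda\}$; coupled with $-\lambda \in \partial g(b^-)$ from \eqref{eq:bm} this yields $0 \in \partial z_y(b^-)$. The case $y > b^+$ is symmetric via \eqref{eq:bp}. For $b^- \le y \le b^+$, take $x^*(y) = y$ so that $\partial p_y(y) = [-\mu,\lambda]$, and use the monotonicity statement \eqref{eq:nondlfrf} together with $-\lambda \in \partial g(b^-)$ and $\mu \in \partial g(b^+)$ to produce an element $s \in \partial g(y) \cap [-\lambda,\mu]$ (at $y=b^-$ one can pick $s = -\lambda$, at $y = b^+$ one can pick $s = \mu$, and in between one has $\partial g(y) \subseteq [-\lambda,\mu]$). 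Then $-s \in \partial p_y(y)$, hence $0 \in \partial z_y(y)$.

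With $x^*(y)$ identified, the explicit formula \eqref{eq:hupdate} is immediate from $h(y) = z_y(x^*(y))$ after simplifying $(b^- - y)_+ = b^- - y$ when $y < b^-$ (and dually for $b^+$). Convexity of $h$ can be obtained either by recognizing $h$ as the infimal convolution of $g$ with the convex function $t \mapsto \mu t_+ + \lambda(-t)_+$, or directly from \eqref{eq:hupdate}: each piece is convex, adjacent pieces agree at the breakpoints, and the one-sided slopes satisfy $-\lambda \le g'_+(b^-) \le g'_-(b^+) \le \mu$ by \eqref{eq:pg} and \eqref{eq:nondlfrf}, which is exactly the monotonicity of derivatives required for convexity of a piecewise function. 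Differentiating each piece and gluing the left/right derivatives at the breakpoints via \eqref{eq:pg} then yields \eqref{eq:hsubg}.

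The main obstacle will be the bookkeeping for the degenerate cases $\lambda = +\infty$, $\mu = +\infty$, or $\partial g^*(-\lambda) = \emptyset$, $\partial g^*(\mu) = \emptyset$, where $b^-$ or $b^+$ becomes infinite. There $p_y$ contains an indicator, so $\partial p_y(y)$ becomes a half-line rather than a compact interval. The convention $\{y\in\Re : y < -\infty\} = \{y\in\Re : y > +\infty\} = \emptyset$ makes the corresponding branch of \eqref{eq:hupdate} vacuous, and one only needs to re-check optimality against the half-line subdifferential; for instance, when $\lambda = +\infty$ and $y \le b^+$, the bound $g'_-(y) \le g'_-(b^+) \le \mu$ still furnishes a subgradient of $g$ at $y$ matched by an element of $\partial p_y(y) = [-\mu,+\infty)$. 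A small additional check handles the boundary case \eqref{eq:bpm0} with $\lambda = \mu = 0$, in which $b^- = b^+$ collapses the middle region to a single point and $h$ reduces to the constant $\min_x g(x)$.
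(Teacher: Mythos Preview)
Your proposal is correct and follows essentially the same route as the paper: a three-case verification of $0\in\partial z_y(x^*(y))$ via the sum rule \eqref{eq:subggz} and the subdifferential of $p_y$, evaluation to obtain \eqref{eq:hupdate}, and then convexity plus \eqref{eq:hsubg} from monotonicity of the one-sided derivatives of the three-piece formula. The paper likewise treats only the case of finite $b^\pm$ in detail and dismisses the infinite/degenerate cases with a remark, so your more explicit handling of those is a welcome elaboration rather than a departure.
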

\begin{proof}
	Under the assumption that the convex function $g$ has bounded level-sets, it is not difficult to see that $z_y$ also has bounded level-sets, and thus problem \eqref{eq:h} has nonempty bounded optimal solution set.
	
	Recall the definitions of $b^-$ and $b^+$ in \eqref{eq:bm}, \eqref{eq:bp} and \eqref{eq:bpm0}. From Lemma \ref{lem:bpbm_relation}, it holds that $b^- \le b^+$. We only consider the case where both  $b^-$ and $b^+$ are finite. The case where $b^-$ and/or $b^+$ takes extended real values (i.e., $\pm\infty$) can be easily proved by slightly modifying the arguments presented here.	Now from the definitions of $b^-$ and $b^+$, we know that $\lambda, \mu$ are finite nonnegative numbers and 
	\begin{equation}
	\label{eq:fbmbp}
	-\lambda \in \partial g(b^-) \quad \mbox{and} \quad \mu \in \partial g(b^+).
	\end{equation}
	According to the value of $y$, we discuss three situations:
	\begin{itemize}
		\item[{(\rm i)}] Suppose that $y < b^-$, then $\partial p_y(b^-) = \left\{ \lambda \right\}$ and thus by \eqref{eq:subggz}, $\partial z_y(b^-) = \partial g(b^-) + \{\lambda\}$. From \eqref{eq:fbmbp}, we know that  $0\in \partial z_y(b^-)$, i.e., $b^- \in \argmin_{x\in\Re} z_y(x)$. 
		\item[{(\rm ii)}] Suppose that $y\in[b^-, b^+]$. Now, $\partial p_y(y) = [-\mu, \lambda]$ and $\partial z_y(y) = \partial g(y) + [-\mu, \lambda]$. From \cite[Theorem 24.1]{rockafellar1970convex}, it holds that 
		\[
		-\lambda =g'_-(b^-) \le g'_-(y) \le g'_+(y) \le g'_-(b^+) = \mu.
		\] 
		Then, we know from \eqref{eq:pg} that $\partial g(y)\subseteq [-\lambda, \mu]$ and thus $0\in \partial z_y(y)$, i.e.,  $y\in \argmin_{x\in\Re}  z_y(x)$.
		\item[{(\rm iii)}] Suppose that $y > b^+$, then $\partial p_y(b^+) = \left\{ -\mu \right\}$ and thus $\partial z_y(b^+) = \partial g(b^+) + \{-\mu\}$. From \eqref{eq:fbmbp}, we know that $0\in \partial z_y(b^+)$, i.e., $b^+ \in \argmin_{x\in\Re} z_y(x)$. 
	\end{itemize}
We thus proved that $x^*(y) = \min (b^+, \max(b^-, y))\in \argmin z_y(x)$. The formula of $h$ in \eqref{eq:hupdate} follows directly by observing  $h(y) = z_y(x^*(y))$ for all $y\in\Re$.

Now we turn to the convexity of $h$.
From \eqref{eq:hupdate}, we have that
\begin{equation*}
h'_+(y) =  
\begin{cases}
-\lambda, \, &{\rm if }\; y< b^-, \\[2pt]
g'_+(y), \, &{\rm if }\; b^- \le y < b^+, \\[2pt]
\mu , \, &{\rm if }\; y \ge b^+.
\end{cases}
\end{equation*}
It is not difficult to know from Lemma  \ref{lem:lfrf} and \eqref{eq:fbmbp} that $h'_+$ is non-decreasing over $\Re$. Then, the convexity of $h$ follows easily from \cite[Theorem 6.4]{hiriart2004fundamentals}.

Lastly, we note that \eqref{eq:hsubg} can be obtained from \eqref{eq:hupdate} and the fact that 
 $-\lambda = h'_-(b^-) \le h'_+(b^-) = g'_+(b^-)$ and $ g'_-(b^+) = h'_-(b^+) \le h'_+(b^+) = \mu$, i.e., $\partial h(b^-) = [-\lambda, g'_+(b^-)]$ and $\partial h(b^+) = [g'_-(b^+), \mu]$. 
We thus complete the proof for the theorem.
\end{proof}

Theorem \ref{thm:h} indicates that the optimal value function $h$ can be constructed directly from the input function $g$. Indeed, after identifying $b^-$ and $b^+$, $h$ is obtained via  replacing $g$ over $(-\infty, b^-)$ and $(b^+, +\infty)$ by simple affine functions. This procedure also results in a truncation of the subgradient of $g$. 
Indeed, from \eqref{eq:hsubg}, we know that $-\lambda \le  h_{-}^\prime(y) \le h_{+}^\prime(y) \le \mu$ for all $y\in\Re$. That is, the subgradient of $h$ is restricted between the upper bound $\mu$ and lower bound $-\lambda$. 
 To further help the understanding of Theorem \ref{thm:h}, we also provide a simple illustration here. Specifically, let $g$ be a quadratic function $g(x) = (x - 0.5)^2$, $x\in\Re$ and set $\lambda = 0.4$, $\mu = 0.2$. In this case, simple computations assert that $b^- = 0.3$ and $b^+ = 0.6$. 
 In Figure \ref{fig1:truncate}, we plot functions $g$, $h$ and their derivatives. Now, it should be more clear that the updating formula in Theorem \ref{thm:h} can be regarded as a generalization to the famous soft-thresholding \cite{Donoho1994ideal,Donoho1995denoising}.

\begin{figure}
    \centering
    \subfigure[Functions $g$ and $h$]{
        \includegraphics[width=0.43\textwidth]{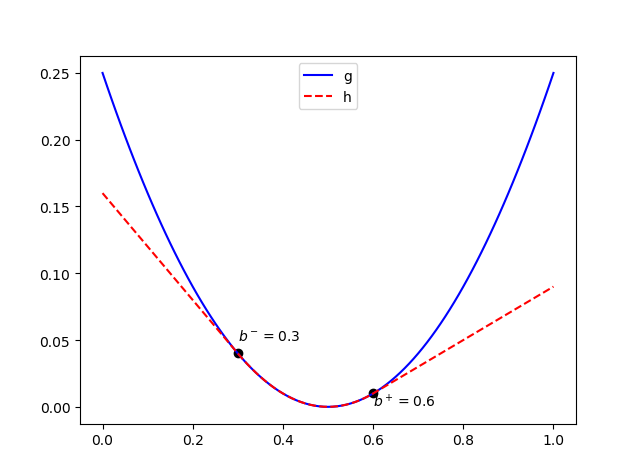}
    }
    \subfigure[Derivatives of $g$ and $h$]{
    
        \includegraphics[width=0.43\textwidth]{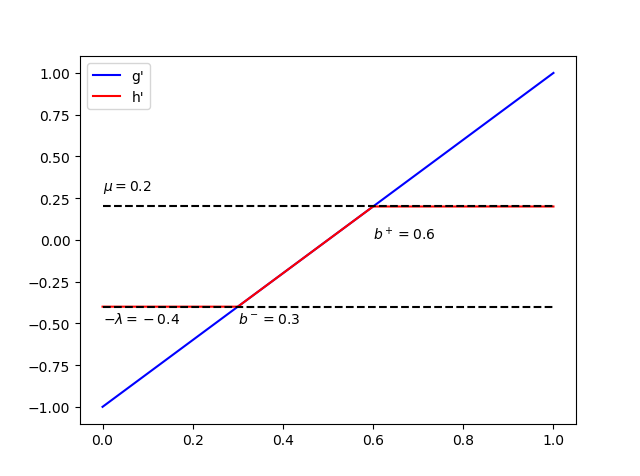}
    }

    \caption{Illustration of Theorem \ref{thm:h}, functions $g$, $h$ in the left panel and their  derivatives in the right panel.}
    \label{fig1:truncate} 
\end{figure}

Next, we turn to problem \eqref{eq:hxi}. At each iteration of Algorithm \ref{alg:DP}, to preform the ``update'' step using Theorem \ref{thm:h}, we need to verify the assumption on $g_i$ which is required in Theorem \ref{thm:h}.

\begin{proposition}\label{prop:hconvex}
{Suppose that Assumption \ref{assum:level_set} holds. Then, it holds that for all $i=1,\ldots,n$,  $g_i = f_i + h_i$ are real-valued  convex functions with bounded level-sets.}
\end{proposition}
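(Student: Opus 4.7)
The plan is to prove the three asserted properties (real-valuedness, convexity, boundedness of level-sets) simultaneously by induction on $i$. For the base case $i=1$, since $h_1 \equiv 0$ we have $g_1 = f_1$, and Assumption \ref{assum:level_set} directly supplies the three properties.

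For the inductive step, I assume $g_{i-1}$ is a real-valued convex function with bounded level-sets. The recursive definition \eqref{eq:hxi} of $h_i$ is exactly the optimal value function in \eqref{eq:h} with the choice $g = g_{i-1}$, $\lambda = \lambda_{i-1}$, $\mu = \mu_{i-1}$, and $y = x_i$. Since the inductive hypothesis provides precisely the hypothesis of Theorem \ref{thm:h}, the explicit formula \eqref{eq:hupdate} applies and guarantees that $h_i$ is both real-valued and convex. Consequently $g_i = f_i + h_i$ is a sum of two real-valued convex functions, hence itself real-valued and convex, and only the bounded level-set property remains to verify.

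For that final property, I invoke Lemma \ref{lem:recessf}, which gives $(g_i 0^+)(d) = (f_i 0^+)(d) + (h_i 0^+)(d)$, and the same lemma together with Assumption \ref{assum:level_set} yields $(f_i 0^+)(d) > 0$ for every $d \neq 0$. It therefore suffices to establish $(h_i 0^+)(d) \geq 0$ for every $d$. I intend to derive this from the simple observation that the regularization terms in \eqref{eq:hxi} are nonnegative, so $h_i(y) \geq \inf_z g_{i-1}(z)$ for every $y$; the inductive hypothesis on $g_{i-1}$ (bounded level-sets plus real-valued convexity) guarantees that this infimum is finite, so $h_i$ is bounded below. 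The first item of Lemma \ref{lem:recessf}, applied to the difference quotient $\bigl(h_i(x_0+td)-h_i(x_0)\bigr)/t$ with $t\to +\infty$, then forces $(h_i 0^+)(d) \geq 0$ in every direction $d$, completing the induction.

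The main obstacle I anticipate is a bookkeeping one rather than a conceptual one: the parameters $\lambda_{i-1}, \mu_{i-1}$ may be $+\infty$, pushing the breakpoints $b^-$ and/or $b^+$ in \eqref{eq:bm}--\eqref{eq:bp} to $\pm\infty$ and turning the corresponding piece of the regularizer into an indicator. Fortunately, Theorem \ref{thm:h} already subsumes these cases via the conventions on $\pm\infty$, and the lower-bound argument for $h_i$ does not depend on the finiteness of $b^\pm$, so the induction goes through uniformly without any case-by-case surgery.
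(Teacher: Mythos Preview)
Your proof is correct and follows essentially the same inductive route as the paper: invoke Theorem~\ref{thm:h} to get convexity and real-valuedness of $h_i$, then use the recession-function characterization in Lemma~\ref{lem:recessf} together with $(h_i 0^+)(d)\ge 0$ and $(f_i 0^+)(d)>0$ to conclude bounded level-sets for $g_i$. The only cosmetic difference is that the paper leaves the verification of $(h_i 0^+)(d)\ge 0$ to the reader (implicitly via the explicit formula~\eqref{eq:hupdate}), whereas you supply a clean argument from the global lower bound $h_i \ge \inf g_{i-1}$, which is a perfectly valid and arguably tidier justification.
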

\begin{proof}
We prove the result by induction on $i$. The result clearly holds with $i=1$ since $h_1 = 0$ and $f_1$ is assumed to be a convex function with  bounded level-sets. Now, suppose that for all $i\le l$, $g_i$ are convex and have bounded level-sets. Then, we can invoke Theorem \ref{thm:h} with $g = g_{l}$, $\lambda = \lambda_{l} $ and $\mu = \mu_{l}$ to know that $h_{l+1}:\Re \to \Re$ is convex and takes the form as in \eqref{eq:hupdate}.  It is not difficult to verify that
\[
(h_{l+1}0^+)(d) \ge 0, \quad \forall\, d\ne 0.
\]
Since $f_{l+1}$ is assumed to be a real-valued convex function with bounded level-sets, we know from Lemma \ref{lem:recessf} that 
$(f_{l+1}0^+)(d) >0$ for all $d\neq 0$ and $g_{l+1}$ is a real-valued convex function satisfying 
 \[ (g_{l+1}0^+)(d) = (f_{l+1}0^+)(d) + (h_{l+1}0^+)(d) > 0, \quad \forall\, d\neq 0.\]
 That is $g_{l+1}$ has bounded level-set and the proof is completed.

\end{proof}

If additional smoothness assumptions hold for the loss functions $f_i$, $i=1,\ldots, n$, a similar proposition on the differentiability of $h_i$ and $g_i$ can also be obtained.
\begin{proposition} \label{prop:hdiff}
Suppose that Assumption \ref{assum:level_set} holds and each $f_i$, $i=1,\ldots,n$, is differentiable. Then, both $g_i$ and $h_i$, $i=1,\ldots, n$, are differentiable functions on $\Re$.
\end{proposition}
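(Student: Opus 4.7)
The plan is to prove the statement by induction on $i$, paralleling the inductive structure used in the proof of Proposition~\ref{prop:hconvex}. The base case $i=1$ is immediate: $h_1\equiv 0$ is differentiable, so $g_1 = f_1 + h_1 = f_1$ is differentiable by hypothesis. For the inductive step, assume $g_l$ is differentiable; I will argue that $h_{l+1}$ is differentiable, from which $g_{l+1} = f_{l+1} + h_{l+1}$ inherits differentiability as a sum of differentiable functions.

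By Proposition~\ref{prop:hconvex} (which is available under Assumption~\ref{assum:level_set}), $g_l$ is a real-valued convex function with bounded level-sets, so Theorem~\ref{thm:h} applies with $g = g_l$, $\lambda = \lambda_l$, $\mu = \mu_l$. In particular, $h_{l+1}$ is convex and its subdifferential is given explicitly by \eqref{eq:hsubg}. Since a real-valued convex function on $\Re$ is differentiable at a point if and only if its subdifferential there is a singleton, I only need to verify this singleton property at the two breakpoints $b_{l+1}^-$ and $b_{l+1}^+$; at every other point the formula \eqref{eq:hsubg} already returns a singleton, using $\partial g_l(y) = \{g_l'(y)\}$ from the inductive hypothesis.

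The key observation at the breakpoints is that differentiability of $g_l$ collapses the intervals appearing in \eqref{eq:hsubg}. Suppose $b_{l+1}^-$ is finite. Then by \eqref{eq:bm}, $-\lambda_l \in \partial g_l(b_{l+1}^-) = \{g_l'(b_{l+1}^-)\}$, which forces $g_l'(b_{l+1}^-) = -\lambda_l$. Substituting into \eqref{eq:hsubg} yields $\partial h_{l+1}(b_{l+1}^-) = [-\lambda_l,\, g_l'(b_{l+1}^-)] = \{-\lambda_l\}$. A symmetric argument based on \eqref{eq:bp} shows $g_l'(b_{l+1}^+) = \mu_l$ and $\partial h_{l+1}(b_{l+1}^+) = \{\mu_l\}$ whenever $b_{l+1}^+$ is finite. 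The degenerate case $\lambda_l = \mu_l = 0$ with $b_{l+1}^- = b_{l+1}^+$ from \eqref{eq:bpm0} is handled identically, since then $g_l'(b_{l+1}^-) = 0$ collapses both candidate intervals to $\{0\}$.

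The only genuine obstacle is this breakpoint computation; the remaining cases are bookkeeping. When $\lambda_l = +\infty$ or $\mu_l = +\infty$, the corresponding breakpoint takes the value $\pm\infty$ and the associated case in \eqref{eq:hsubg} is vacuous, so no additional check is needed, completing the induction.
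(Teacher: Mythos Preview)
Your proof is correct and follows essentially the same approach as the paper: induction on $i$, invoking Proposition~\ref{prop:hconvex} and Theorem~\ref{thm:h}, and then using the differentiability of $g_l$ to collapse the subdifferential intervals at the breakpoints $b_{l+1}^\pm$ to singletons via $g_l'(b_{l+1}^-)=-\lambda_l$ and $g_l'(b_{l+1}^+)=\mu_l$. Your treatment is in fact slightly more thorough, as you explicitly note the singleton criterion for differentiability of convex functions and separately address the degenerate case $\lambda_l=\mu_l=0$ and the vacuous infinite-breakpoint cases, which the paper dismisses in a sentence.
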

\begin{proof}

We prove the proposition by induction on $i$.
Clearly, the assertion holds with $i=1$ since $h_1 = 0$, $g_1 = h_1 + f_1 = f_1$, and $f_1$ is assumed to be differentiable.
Now, assume that for all $i\le l$, $h_{l}$ and $g_{l}$ are differentiable. 
Then, by Proposition \ref{prop:hconvex} and  Theorem \ref{thm:h}, we know that $h_{l+1}$ is differentiable over $(-\infty, b_{l+1}^-)$, $(b_{l+1}^-,b_{l+1}^+)$, and $(b_{l+1}^+,+\infty)$. Hence, we should check the differentiability of $h$ at $b^-_{l+1}$ and $b^+_{l+1}$.
{Here, we only consider the case with $-\infty < b_{l+1}^- \le b_{l+1}^+ < +\infty$. The case where either $b_{l+1}^-$ and/or $b_{l+1}^+$ takes extended real values (i.e., $\pm\infty$) can be easily proved by slightly modifying the arguments presented here.}
Recalling definitions of $b_{l+1}^-$ and $b_{l+1}^+$ in \eqref{eq:bm}  and \eqref{eq:bp}, and using the differentiability of $g_{l}$, we have $g'_{l+}(b_{l+1}^-)=g_l'(b_{l+1}^-) = -\lambda_l$ and $g'_{l-}(b_{l+1}^+) =g_l'(b_{l+1}^+) = \mu_l$. Then, \eqref{eq:hsubg}  in Theorem \ref{thm:h} implies that
\[
\partial h_{l+1}(b_{l+1}^-) = [-\lambda_l, g'_{l+}(b_{l+1}^-)] = \{ - \lambda_l \} \mbox{ and } \partial h_{l+1}(b_{l+1}^+) = [g'_{-}(b_{l+1}^+), \mu_l] =  \{ \mu_l \}.
\]
 Hence, $h_{l+1}$ and $g_{l+1} = h_{l+1} + f_{l+1}$ are differentiable over $\Re$. We thus complete the proof.
\end{proof}

With the above discussions, in particular, Theorem \ref{thm:h} and Proposition \ref{prop:hconvex}, we can write the ``update'' step in Algorithm \ref{alg:DP} in a more detailed fashion.
\begin{algorithm}[H]
\caption{The ``update'' step in Algorithm \ref{alg:DP}: $[h,b^-,b^+] = \mbox{\bf update} (g, \lambda , \mu)$}
\label{alg:update}
\begin{algorithmic}[1]
\State {{\bf Input} function $g$ and parameters $\lambda,\mu$. }
\State {Compute $(b^-,b^+)$ according to definitions \eqref{eq:bm},  \eqref{eq:bp} and \eqref{eq:fbmbp}.}
\State {Compute $h$ via \eqref{eq:hupdate}: for all $y\in\Re$,}
\State {\begin{equation*}
	h(y) =  
	\begin{cases}
	g(b^-) + \lambda(b^- - y), \, &{\rm if }\; y< b^-, \\[2pt]
	g(y), \, &{\rm if }\; b^-\le y \le b^+, \\[2pt]
	g(b^+) + \mu(y - b^+), \, &{\rm if }\; y > b^+.
	\end{cases} 
	\end{equation*}}
\State {\textbf{Return} $h, b^-, b^+$ }
\end{algorithmic}
\end{algorithm}

\noindent Meanwhile, we can further obtain the implementation details for the ``recover'' step in Algorithm \ref{alg:DP} based on the discussions in \eqref{eq:backward_xi} and Theorem \ref{thm:h}.
\begin{algorithm}[H]
\caption{ The ``recover'' step in Algorithm \ref{alg:DP}: $[x_1,\ldots, x_{n-1}] = \mbox{\bf recover}(x_n,\{ (b_i^-,b_i^+) \}_{i=2}^{n})$}
\label{alg:recover}
\begin{algorithmic}[1]
\State{{\bf Input} $x_n$ and $\{ (b_i^-,b_i^+) \}_{i=2}^{n})$} 
\For {$i = n: 2$}
\State {$x_{i-1} = \min (b_i^+, \max(b_i^-, x_i))$} 
\EndFor
\State {\textbf{Return} $x_1,\cdots,x_{n-1}$ }
\end{algorithmic}
\end{algorithm}

Thus, instead of using the definition  \eqref{eq:h} directly to compute $h_i$, we leverage on the special structure of the nearly isotonic regularizer and show that $h_i$ can be explicitly constructed by solving at most two one-dimensional optimization problems. Hence, we obtain an implementable dynamic programming algorithm for solving the generalized nearly isotonic optimization problem \eqref{eq:gip}. 
\begin{remark}
\label{rmk:bmbp}
One issue we do not touch seriously here is the computational details of obtaining $b^-$ and $b^+$ in Algorithm 2. Based on the definitions in \eqref{eq:bm}, \eqref{eq:bp} and \eqref{eq:bpm0}, to obtain $b^-$ and $b^+$, at most two one-dimensional optimization problems in the form of \eqref{eq:omega} need to be solved.
Given the available information of $g$ (such as the function value and/or subgradient of $g$ at given points),  various one-dimensional optimization algorithms can be used. Moreover, in many real applications such as the later discussed $\ell_1$-GNIO and $\ell_2$-GNIO problems, $b^-$ and $b^+$ can be computed via closed-form expressions.  
\end{remark}

\section{Implementation details of Algorithm \ref{alg:DP} for $\ell_2$-GNIO}
\label{sec:imple_quadratic}
In the previous section, an implementable DP {based} algorithmic framework is developed for solving the GNIO problem \eqref{eq:gip}. We shall mention that for special classes of loss functions, it is possible to obtain a low running complexity implementation of Algorithm \ref{alg:DP}. As a prominent example, in this section, we discuss some implementation details of Algorithm \ref{alg:DP} for solving $\ell_2$-GNIO problems, i.e., for all $i=1,\ldots n$, each loss function $f_i$ in \eqref{eq:gip} is a simple quadratic function. Specifically, given data points $\{y_i\}_{i=1}^n$ and positive weights $\{w_i\}_{i=1}^n$, we consider the convex quadratic loss functions
\begin{equation}
\label{eq:l2loss}
f_i(x_i) = w_i (x_i - y_i)^2, \quad  x_i \in\Re, \quad i=1,\ldots, n.
\end{equation}  and the following 
problem:
\begin{equation*}
\mbox{($\ell_2$-GNIO)} \qquad 
\min_{x \in \Re^n} \quad \sum_{i=1}^{n} w_i(x_i - y_i)^2 + \sum_{i=1}^{n-1} \lambda_{i} (x_i - x_{i+1})_{+} + \sum_{i=1}^{n-1} \mu_{i} (x_{i+1} - x_{i})_{+},
\end{equation*}
We note that quadratic loss functions have been extensively used in the context of shape restricted statistical regression problems \cite{Bartholomew1972statistic,Hoefling2009path,Ryu2004medical,Tibshirani2016nearly}.

In the following, special numerical representations of quadratic functions will be introduced to achieve a highly 
efficient implementation of Algorithm \ref{alg:DP} for solving $\ell_2$-GNIO problems. We start with a short introduction of univariate piecewise quadratic functions. We say $h$ is a univariate piecewise quadratic function if there is a strictly increasing sequence $\{ \beta_l \}_{l=1}^{K} \subseteq \Re$ and $h$ agrees with a quadratic function on each of the intervals $(-\infty,\beta_1)$, $[\beta_l,\beta_{l+1})$, $l=1,\ldots, K-1$, and $[\beta_K,+\infty)$. Here, each $\beta_l$ is referred to as a ``breakpoint'' of $h$ and univariate affine functions are regarded as degenerate quadratic functions. The following proposition states that for $\ell_2$-GNIO problems, in each iteration of Algorithm \ref{alg:DP}, the corresponding functions $h_i$ and $g_i$, $i=1,\ldots,n$, are all convex differentiable piecewise quadratic functions. The proof of the proposition is similar to that of Propositions \ref{prop:hconvex} and \ref{prop:hdiff} and is thus omitted. 
\begin{proposition}
	\label{prop:quad}
	Let the loss functions $f_i$, $i=1,\ldots, n$, in GNIO problem \eqref{eq:gip} be convex quadratic functions as given in \eqref{eq:l2loss}. Then, in Algorithm \ref{alg:DP}, all involved functions $h_i$ and $g_i$, $i=1,\ldots, n$, are univariate convex differentiable piecewise quadratic functions. Moreover, it holds that
	\[
	\inf_x g_i'(x) = -\infty\quad  \mbox{ and } \quad 
	\sup_x g_i'(x) = + \infty, \quad \forall\, i=1,\ldots,n.
	\]
\end{proposition}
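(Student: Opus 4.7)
The plan is a direct induction on $i$, mirroring the arguments of Propositions \ref{prop:hconvex} and \ref{prop:hdiff} while keeping explicit track of the piecewise quadratic structure. For the base case $i=1$ we have $h_1\equiv 0$ and $g_1(x) = w_1(x-y_1)^2$, which is a convex differentiable quadratic (hence a piecewise quadratic with no breakpoints), and its derivative $g_1'(x) = 2w_1(x-y_1)$ ranges over all of $\Re$ since $w_1>0$.

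For the inductive step, suppose the claim holds at index $l$. Then $\inf_x g_l'(x) = -\infty < 0 < +\infty = \sup_x g_l'(x)$, so by Corollary \ref{cor:intermediate} the breakpoints $b^-_{l+1}$ and $b^+_{l+1}$ from \eqref{eq:bm}, \eqref{eq:bp}, \eqref{eq:bpm0} are finite real numbers whenever the corresponding parameters $\lambda_l$, $\mu_l$ are finite (and are $\pm\infty$ otherwise). Applying Theorem \ref{thm:h} with $g = g_l$, formula \eqref{eq:hupdate} expresses $h_{l+1}$ as $g_l$ on the middle interval $[b^-_{l+1},b^+_{l+1}]$ and as an affine function on each outer half-line (vacuous when the corresponding breakpoint is $\pm\infty$). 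Since $g_l$ is piecewise quadratic by the inductive hypothesis, truncating it to the middle interval and appending the two affine tails yields another piecewise quadratic function, so $h_{l+1}$ is piecewise quadratic; adding the quadratic $f_{l+1}$ then makes $g_{l+1} = f_{l+1}+h_{l+1}$ a sum of quadratics on each piece, and hence piecewise quadratic as well. Convexity of $h_{l+1}$, and so of $g_{l+1}$, is inherited from Theorem \ref{thm:h}.

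Differentiability of $h_{l+1}$ then follows exactly as in the proof of Proposition \ref{prop:hdiff}: within each open piece it is immediate, and at the junction points $b^-_{l+1}$ and $b^+_{l+1}$ (when finite) the one-sided derivatives match because, by \eqref{eq:bm}--\eqref{eq:bpm0} together with the inductive differentiability of $g_l$, one has $g_l'(b^-_{l+1}) = -\lambda_l$ and $g_l'(b^+_{l+1}) = \mu_l$. Consequently $g_{l+1}$ is also differentiable on $\Re$.

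Finally, for the unbounded-derivative claim, write $g_{l+1}'(x) = 2w_{l+1}(x-y_{l+1}) + h_{l+1}'(x)$. Convexity of $h_{l+1}$ makes $h_{l+1}'$ non-decreasing, so $h_{l+1}'(x) \ge h_{l+1}'(0)$ for $x\ge 0$ and $h_{l+1}'(x) \le h_{l+1}'(0)$ for $x\le 0$. As $x\to +\infty$ the quadratic term $2w_{l+1}(x-y_{l+1})$ blows up to $+\infty$ and dominates, yielding $\sup_x g_{l+1}'(x) = +\infty$; symmetrically $\inf_x g_{l+1}'(x) = -\infty$. I expect no serious obstacle in carrying out this plan; the only aspect requiring a little care is the bookkeeping of the edge cases where $\lambda_l$ or $\mu_l$ equals $+\infty$, in which case the corresponding affine tail of $h_{l+1}$ simply disappears and the middle piece absorbs the whole half-line, so every step above goes through verbatim.
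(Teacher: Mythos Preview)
Your proposal is correct and follows essentially the same approach the paper intends: the paper omits the proof, stating only that it is similar to those of Propositions~\ref{prop:hconvex} and~\ref{prop:hdiff}, and your induction argument is precisely that, augmented with the straightforward bookkeeping needed to track the piecewise quadratic structure through formula~\eqref{eq:hupdate} and to verify the unboundedness of $g_{l+1}'$ via the strictly positive leading coefficient $w_{l+1}$.
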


From Proposition \ref{prop:quad}, we see that an important issue in the implementation of Algorithm \ref{alg:DP} is the numerical representation of the univariate convex differentiable piecewise quadratic functions. Here, inspired by the data structures exploited in \cite{Rote2019isoDP,Johnson2013dplasso}, we adopt a strategy called ``difference of coefficients'' to represent these functions. Based on these representations, implementation details of the subroutines ``sum'' and ``update'' in Algorithm \ref{alg:DP} will be further discussed.

Let $h$ be a univariate convex differentiable piecewise quadratic function and $\{\beta_l\}_{l=1}^{K}$ be the associated breakpoints.
These breakpoints define $K+1$ intervals in the following form:
\[
(-\infty,\beta_1), \, [\beta_l,\beta_{l+1}), \, l = 1,\ldots, K-1,  \mbox{ and } [\beta_K,+\infty).
\]
Assume that $h$ agrees with $a_j x^2 +  b_j x$ on the $j$-th ($1 \le j \le K+1$) interval\footnote{The intercepts are ignored for all the pieces as they are irrelevant in the optimization process.}. Here, $a_j \ge 0$, $b_j \in \Re$ are given data. To represent $h$, we first store the breakpoints $\{\beta_l\}_{l=1}^{K}$ in a sorted list $B^h$ with ascending order and store the number of breakpoints as $K^h = K$. 
Associated with each breakpoint $\beta_l$, we compute the difference of the coefficients between the consecutive piece and the current piece:
 $d_l = (a_{l+1} -a_l , b_{l+1} - b_l )$, $l=1,\ldots, K$ and store $\{d_l\}_{l=1}^{K}$ in list $D^h$. Information of the leftmost and rightmost pieces are stored in two tuples $c_L^h = (a_1,b_1)$ and $c^h_R = (a_{K+1},b_{K+1})$, respectively. With these notation, we can write the representation symbolically as 
 \[
 h = [ B^h, D^h, c_L^h, c_R^h, K^h].
 \]
 We summarize the above representation of $h$ in Table \ref{table:data structure}. 
\begin{table}
	\centering
	\begin{tabular}{|l|l|l|}
		\hline
		name     & data-type  & \multicolumn{1}{c|}{explaination} \\ \hline
		$B^h$     & list  & breaking points                 \\ \hline
		$D^h$   & list   & differences of coefficients       \\ \hline
		$c^h_L$ & tuple & coefficients of the leftmost piece   \\ \hline
		$c^h_R$ & tuple & coefficients of the rightmost piece \\ \hline
		$K^h$     & integer  & number of breaking points                 \\ \hline
	\end{tabular}
	\caption{Representation of the univariate convex differentiable piecewise quadratic function $h$.}
	\label{table:data structure} 
\end{table}
\noindent In the same spirit, given $y\in\Re$, for the	 quadratic function $f(x) = w (x-y)^2$, $x\in\Re$, we have the following representation $f = [  \emptyset,  \emptyset, (w, -2wy), (w, -2wy), 0]$.
With the help of these representations, for $\ell_2$-GNIO problems, one can easily perform the ``sum'' step in Algorithm \ref{alg:DP}. 
More specifically, the representation of $g=h+f$ can be written in the following way
\[
g = [B^g, D^g, c_L^g, c_R^g, K^g] = [B^h, D^h, c_L^h + (w, -2wy), c_R^h + (w, -2wy), K^h].
\]
That is, to obtain the representation of $g$, one merely needs to modify the coefficients of the leftmost and rightmost pieces, i.e., $c_L^h$ and $c_R^h$, in the representation of $h$. Hence, with this representation, the running time for the above ``sum'' step is ${\cal O}(1)$.
We summarize the above discussions into the following framework. 
\begin{algorithm}
	\caption{``sum'' in Algorithm \ref{alg:DP} for $\ell_2$-GNIO: $g =\mbox{{\bf sum}}\, (h,f)$}
	\label{alg:sum}
	\begin{algorithmic}[1]
		\State {{\bf Input} representations $h = [B, D, c_L, c_R, K]$ and $f = [ \emptyset,  \emptyset, (w, -2wy), (w, -2wy), 0]$}
		\State { Compute the representation of $g$ via 
	\begin{equation*}
	[B, D, c_L, c_R, K] = [B, D, c_L + (w, -2wy), c_R + (w, -2wy), K].
\end{equation*}
}\vspace{-0.4cm}
		\State {\textbf{Return} $g = [B, D, c_L, c_R, K]$. }
	\end{algorithmic}
\end{algorithm}

In the following, we show how to use the above representations to efficiently implement the ``update'' step in Algorithm \ref{alg:DP} for solving $\ell_2$-GNIO problem. By Proposition \ref{prop:quad}, we know that in each iteration of Algorithm \ref{alg:DP}, the output $b_i^-$ (respectively, $b_i^+$) of the ``update'' step will always be finite real numbers except the case with $\lambda_{i-1}$ (respectively, $\mu_{i-1}$) being $+\infty$. Hence, we focus on the computations of finite $b_i^-$ and/or $b_i^+$. Consider an abstract instance where a univariate convex differentiable piecewise quadratic function $g = [B^g, D^g, c^g_L, c^g_R, K^g]$ and positive parameters $\lambda$ and $\mu$ are given. We aim to compute $[h, b^-, b^+] = {\rm update}(g, \lambda, \mu)$ and the corresponding representation of $h = [B^h, D^h, c^h_L, c^h_R, K^h]$. Specifically, from definition \eqref{eq:bm}, we compute $b^-$ by solving the following equation: 
\[
g'(b^-) + \lambda = 0.
\] 
Based on the representation of $g$, the derivative of $g$ over all pieces can be easily obtained. Hence, $b^-$ can be efficiently determined by searching from the leftmost piece to the rightmost piece of $g$. Similarly, $b^+$ can be computed in a reverse searching order. The implementation details are presented in Algorithm \ref{alg:truncate}. In fact, as one can observe, the computations of $b^-$ and $b^+$ can be done simultaneously. For the simplicity, we do not exploit this parallelable feature in our implementation. We also have the following observations in Algorithm \ref{alg:truncate}: %
(1) in each round of Algorithm \ref{alg:truncate}, the number of breakpoints, denoted $\Delta$, satisfies $\Delta = k^- + k^+ -2 \le K^g$ and 
$K^h \le K^g - \Delta + 2$; (2) since $b^- \le b^+$, each breakpoint can be marked as ``to delete'' at most once; (3) the total number of the executions of the while-loops is the same as the number of the deleted breakpoints; (4) the output $B^h$ is  a sorted list.

\begin{algorithm}
	\caption{ ``update'' in Algorithm \ref{alg:DP} for $\ell_2$-GNIO: $[h,b^-,b^+] = \mbox{{\bf update}}\, (g,\lambda,\mu)$ }
	\label{alg:truncate}
	\begin{algorithmic}[1]
		\State {{\bf Input} the representation $g = [B, D, c_L, c_R, K]$ and parameters $\lambda$, $\mu$}
		\\
		\State { $k^- = k^+ = 1$}
		
		\If {$\lambda = + \infty$}
		\State { $b^- = -\infty$ }
        \Else
		\State {$(a,b) = c_L$}
		\If {$K \ge 1$}
		\While {$2a B\{k^-\} + b < -\lambda$ }  \Comment{$B\{k^-\}$ denotes the $k^-$th element in $B$}
		\State {$(a,b) =  (a,b) + D\{k^-\}$} \Comment{$D\{k^-\}$ denotes the $k^-$th element in $D$}
		\State {Mark the $k^-$th breakpoint as ``to delete'', $k^- = k^-+1$}
		\EndWhile
		\EndIf
		\State {$b^- = -  \frac{b +\lambda}{2a}, \, c_L = (0, -\lambda)$ } \Comment{Found the ``correct'' piece and $g'(b^-) = 2ab^- + b = -\lambda$}
		\EndIf
		\\
		\If {$\mu = +\infty$}
		\State { $b^+ = +\infty$ }
		\ElsIf {$\lambda = \mu = 0$}
		\State{$b^+ = b^-$, mark the $k^-, \ldots, K$th breakpoints as ``to delete'', $k^+ = K - k^-+2$}
		\Else
		\State {$(a',b') = c_R$  }
		\If {$K\ge 1$}
		\While {$2a' B\{K - k^+ + 1\} + b' > \mu$}
		\State {$(a',b') =(a',b') - D\{K - k^+ + 1\}$} 
        \State {Mark the $(K - k^+ + 1)$th breakpoint as ``to delete'', $k^+ = k^+ + 1$}
		\EndWhile
		\EndIf
		\State {$b^+ = \frac{ \mu - b'}{2a'},\, c_R = (0,\mu)$ } \Comment{Found the ``correct'' piece and $g'(b^+) = 2a' b^+ + b' = \mu$}
		\EndIf

\\
		\State {Delete $(k^- + k^+ -2)$ breakpoints marked as ``to delete'' from $B$ and the corresponding coefficients from $D$} \Comment{Since $b^- \le b^+$, each breakpoint can be marked as ``to delete'' at most once}
\\
		\If {$\lambda < + \infty$}
		\State{$B = B.prepend(b^-), \, D = D.prepend((a,b) - c_L), \, K = K - (k^- + k^+ -2) + 1$}
		\EndIf
		\If {$\mu < +\infty$ and $\lambda + \mu > 0$}
		\State {$B = B.append(b^+), \, D = D.append(c_R - (a',b')), \, K = K + 1$}
		\EndIf
		\State {\textbf{Return} $h = [B, D, c_L, c_R, K]$ and $b^-,b^+$ }
	\end{algorithmic}
\end{algorithm}

Now, we are ready to discuss the worst-case running time of Algorithm \ref{alg:DP} for solving $\ell_2$-GNIO problems. We first  observe that the ``sum'' and ``recover'' steps in Algorithm \ref{alg:DP} takes ${\cal O}(n)$ time in total.  
For $i=1, \ldots, n-1$, we denote the number of breakpoints of $g_i$ by $K_i$ and the  number of deleted breakpoints from $g_i$ by $\Delta_i$. From the observation (1), we know that $\Delta_i \le K_i$ and $K_{i+1} \le K_i - \Delta_i + 2$ for $i=1,\ldots, n-1$.  Simple calculations yield 
$
	\sum_{i=1}^{n-1}\Delta_i \le 2(n - 2)
$, i.e., the total number of  the deleted breakpoints in Algorithm \ref{alg:DP} is upper bounded by $2n$.
This, together with the observation (3), implies that the total  number of executions of the while-loop in Algorithm \ref{alg:truncate} is upper bounded by $2n$. Thus, the overall running time of the ``update'' step is ${\cal O}(n)$. Therefore, based on the ``difference-of-coefficients'' representation scheme and the special implementations of the ``sum'' and the ``update'' steps,  we show that Algorithm \ref{alg:DP} solves  $\ell_2$-GNIO with ${\cal O}(n)$ worst-case running time. That is our specially implemented Algorithm \ref{alg:DP} is an optimal algorithm for solving  $\ell_2$-GNIO problems.

\begin{remark}
	\label{rmk:l1rmk}
{
	In the above discussions, in addition to the numerical representations of convex differentiable piecewise quadratic functions, we see that the data structures also play important roles in both the implementations and the running time analysis. Hence,  in order to obtain efficient implementations of Algorithm \ref{alg:DP} for solving other GNIO problems, one has to explore appropriate problem dependent data structures. For example, consider the $\ell_1$-GNIO problem, i.e., given $y\in\Re^n$ and a positive weight vector $w\in\Re^n$, let the loss functions in \eqref{eq:gip} take the following form: $f_i(x_i) = w_i |x_i - y_i|$, $x_i\in\Re$, $i=1,\ldots,n$. Due to the nonsmoothness of $f_i$, if the data structure ``list'' were used to store the involved breakpoints, the running time of each ``sum'' step in Algorithm \ref{alg:DP} will be ${\cal O}(n)$ and the overall running time will be ${\cal O}(n^2)$ in the worst case. For the acceleration, we propose to use red-black trees \cite{Cormen2009intro} to store these breakpoints. It can be shown that with this special data structure, the running time of each ``sum'' step can be reduced to ${\cal O}(\log n)$. Hence, the overall running time of the ``sum'' step, as well as Algorithm \ref{alg:DP}, is ${\cal O}(n \log n)$. 
	Since the running time analysis is not the main focus of the current paper, more details on the ${\cal O}(n \log n)$ complexity of Algorithm \ref{alg:DP} for solving $\ell_1$-GNIO are presented in the Appendix. This result is also verified by various numerical evidences in Section \ref{sec:num}.
	Finally, we shall mention that the ${\cal O}(n\log n)$ running time matches the best-known results \cite{stout2019fastest} for some special $\ell_1$-GNIO problems, such as $\ell_1$-isotonic regression and  $\ell_1$-unimodal regression problems.
}
\end{remark}

\section{Numerical experiments}
\label{sec:num}
In this section, we shall evaluate the performance of our dynamic programming Algorithm \ref{alg:DP} for solving generalized nearly isotonic regression problems \eqref{eq:gip}. Since the $\ell_1$ losses and $\ell_2$ losses are widely used in practical applications, we implement Algorithm \ref{alg:DP} for solving both $\ell_1$-GNIO and $\ell_2$-GNIO problems:
\begin{equation}
\label{eq:abs}
\min_{x\in \Re^n} \quad \sum_{i=1}^{n} w_i|x_i - y_i| + \sum_{i=1}^{n-1} \lambda_{i} (x_i - x_{i+1})_{+} + \sum_{i=1}^{n-1} \mu_{i} (x_{i+1} - x_{i})_{+},
\end{equation} 
and
\begin{equation}
\label{eq:quad}
\min_{x \in \Re^n} \quad \sum_{i=1}^{n} w_i(x_i - y_i)^2 + \sum_{i=1}^{n-1} \lambda_{i} (x_i - x_{i+1})_{+} + \sum_{i=1}^{n-1} \mu_{i} (x_{i+1} - x_{i})_{+},
\end{equation}
where $y\in\Re^n$ is a given vector, $\{w_i\}_{i=1}^n$ are positive weights, $\lambda_i$ and $\mu_i$, $i=1,\ldots,n-1$ are nonnegative and possibly infinite parameters.
We test our algorithms using both randomly generated data and the real data from various sources.
For the testing purposes, we vary the choices of parameters $\lambda_i$ and $\mu_i$, $i=1,\ldots,n-1$.
Our algorithms are implemented in {C/C++}\footnote{The code is available at \url{https://github.com/chenxuyu-opt/DP_for_GNIO_CODE} (DOI:10.5281/zenodo.7172254).} and all the computational results are obtained on a laptop (Intel(R) Core i7-10875H CPU at 2.30 GHz, 32G RAM, and 64-bit Windows 10 operating system).

Note that our algorithm, when applied to solve problems \eqref{eq:abs} and \eqref{eq:quad}, is an exact algorithm if the rounding errors are ignored. As far as we know, there is currently no other {open-access} exact algorithm which can simultaneously solve both problems \eqref{eq:abs} and \eqref{eq:quad} under various settings of parameters.
Fortunately, these two problems can be equivalently rewritten as linear programming and convex quadratic programming problems, respectively. Hence, in first two subsections of our experiments, we compare our algorithm with Gurobi (academic license, version 9.0.3), which can robustly produce high accurate solutions and is among the most powerful commercial solvers for linear and quadratic programming. For the Gurobi solver, we use the default
parameter settings, i.e., using the default stopping tolerance and all computing cores. 
For all the experiments,
our algorithm and Gurobi output objective values whose relative gaps are of order $10^{-8}$, i.e., both of them produce highly accurate solutions.

To further examine the efficiency of our dynamic programming algorithm, in the last part of our numerical section, we conduct more experiments on solving the fused lasso problem \eqref{prob:flsa}. Specifically, we compare our algorithm with the C implementation\footnote{ { \url{ https://lcondat.github.io/download/Condat_TV_1D_v2.c }}} of Condat's direct algorithm \cite{Condat2013direct}, which, based on the extensive evaluations in \cite{Barbero2018modular}, appears to be one of the most efficient algorithm specially designed and implemented  for solving the large-scale fused lasso problem \eqref{prob:flsa}.

\subsection{DP algorithm versus Gurobi: Simulated data}
We first test our algorithm with simulated data sets. Specifically, the input vector $y\in\Re^n$ is set to be a random vector with i.i.d. uniform ${\cal U}(-100,100)$ entries. The problems sizes, i.e., $n$, vary from $10^4$ to {$10^7$}.
The positive weights $\{w_i\}_{i=1}^n$ are generated in three ways: (1) fixed, i.e., $w_i = 1$ in $\ell_1$-GNIO \eqref{eq:abs} and $w_i = 1/2$ in $\ell_2$-GNIO \eqref{eq:quad}, $i=1,\ldots,n$; (2) i.i.d. sampled from the uniform distribution ${\cal U}(10^{-2}, 10^2)$; (3) i.i.d. sampled from Gaussian distribution ${\cal N}(100, 100)$ with possible nonpositive outcomes replaced by $1$.  
For all  problems, we test {seven} settings of parameters $\{\lambda_i\}_{i=1}^{n-1}$ and $\{\mu_i\}_{i=1}^{n-1}$:
\begin{enumerate}
	\item Isotonic: $\lambda_i = +\infty$, $\mu_i = 0$ for $i=1,\ldots, n-1$;
	\item Nearly-Isotonic: $\lambda_i = \log(n)$, $\mu_i = 0$ for $i=1,\ldots, n-1$;

	\item Unimodal: $\lambda_i = +\infty, \mu_i = 0$ for $i=1,\ldots, m$ and $\lambda_i = 0, \mu_i = +\infty$ for $i = m+1, \ldots, n-1$ with $m = [\frac{n-1}{2}]$;
	
		\item Fused: $\lambda_i = \mu_i = \log(n)$ for $i=1,\ldots, n-1$;
	\item Uniform: All $\lambda_i$ and $\mu_i$, $i=1,\ldots,n-1$ are i.i.d. sampled from the uniform distribution ${\cal U}(0, 10^3)$;
	\item Gaussian: All $\lambda_i$ and $\mu_i$, $i=1,\ldots,n-1$ are i.i.d. sampled from Gaussian distribution ${\cal N}(100,100)$ with possible negative outcomes set to be $0$;
	\item {Mixed:  $\lambda_i = +\infty$ for $i = 1,\ldots, [\frac{n}{5}]$ and $\mu_i = +\infty$ for $i = n-[\frac{n}{5}],\ldots, n-1$; 
$\lambda_i$, $i = [\frac{n}{5}] + 1,\ldots,n-1$ and $\mu_i$, $i = 1, \ldots, n-1-[\frac{n}{5}]$ are i.i.d. sampled from the uniform distribution ${\cal U}(0, 10^3)$.}
\end{enumerate}
Table \ref{table:l1w} reports the detailed numerical results of Algorithm \ref{alg:DP} for solving $\ell_1$-GNIO problem \eqref{eq:abs} under the above mentioned different settings. As one can observe, our algorithm is quite robust to various patterns of weights and parameters. Moreover, the computation time scales near linearly with problem dimension $n$, which empirically verifies our theoretical result on the worst-case ${\cal O}(n\log n)$ running time of Algorithm \ref{alg:DP} for $\ell_1$-GNIO problems in Remark \ref{rmk:l1rmk}.

\begin{table}
\begin{small}
	\centering
	\begin{tabular}{@{}cccccccc@{}}
		\toprule
		\multicolumn{8}{c}{ { Runtime  of Algorithm \ref{alg:DP} for \eqref{eq:abs} with $w_i = 1$ } }                                 \\ \midrule
		$n$   & Isotonic & Nearly-isotonic & Unimodal & Fused  & Uniform & Gaussian & Mixed \\ \midrule
		1e4 & 0.003      & 0.003           &  0.003       & 0.003    & 0.002    & 0.003   & 0.002 \\
		1e5 & 0.046      & 0.036           &  0.042       & 0.031    & 0.019    & 0.022   & 0.019\\
		1e6 & 0.582      & 0.306           &  0.545       & 0.298    & 0.189    & 0.214   & 0.228\\ 
		1e7 & 7.685      & 3.040           &  7.678       & 3.107    & 1.839    & 2.116   & 2.938 \\ 
		 \bottomrule
				\toprule
		\multicolumn{8}{c}{  {Runtime of Algorithm \ref{alg:DP} for \eqref{eq:abs} with $w_i \sim  {\cal U}(10^{-2}, 10^2)$ } }                                 \\ \midrule
		$n$   & Isotonic & Nearly-isotonic & Unimodal & Fused  & Uniform & Gaussian  & Mixed \\ \midrule
		1e4 & 0.004      & 0.003           &  0.003       & 0.002    &  0.002     & 0.001   & 0.002 \\
		1e5 & 0.048      & 0.025           &  0.041       & 0.027    &  0.023     & 0.012   & 0.209\\
		1e6 & 0.701      & 0.250           &  0.558       & 0.255    &  0.195     & 0.261   & 0.239 \\
		1e7 & 9.295      & 2.461           &  8.052       & 2.263    &  1.877     & 2.665   & 3.102 \\ 
		\bottomrule
			\toprule
		\multicolumn{8}{c}{ { Runtime of Algorithm \ref{alg:DP} for \eqref{eq:abs} with $w_i \sim  {\cal N}(100, 100)$ } }                                 \\ \midrule
		$n$   & Isotonic & Nearly-isotonic & Unimodal & Fused  & Uniform & Gaussian & Mixed\\ \midrule
		1e4 & 0.004      & 0.002           &  0.004       & 0.002    &  0.002     & 0.003  & 0.002 \\
		1e5 & 0.053      & 0.024           &  0.043       & 0.025    &  0.025     & 0.028  & 0.209 \\
		1e6 & 0.718      & 0.244           &  0.574       & 0.249    &  0.211     & 0.271  & 0.244 \\
		1e7 & 9.416      & 2.492           &  8.531       & 2.258    &  1.927     & 2.565  & 3.204 \\ 
		\bottomrule
	\end{tabular}
	\caption{Runtime (in seconds) of Algorithm \ref{alg:DP} for solving $\ell_1$-GNIO problem \eqref{eq:abs} under different settings. Results are averaged over 10 simulations.}
	\label{table:l1w}
\end{small}
\end{table}

Note that when parameters $\lambda_i$ and $\mu_i$ are finite, $\ell_1$-GNIO problem \eqref{eq:abs} can be formulated as the following linear programming problem (see, e.g., \cite[Section 7]{Hochbaum2017faster}):
\begin{equation}
\label{prob:LPgip}
\begin{aligned}
\min_{x,z\in \Re^n, u,v \in \Re^{n-1} } \quad  & \sum_{i=1}^{n} w_i z_i +  \sum_{i=1}^{n-1} \lambda_i u_i + \sum_{i=1}^{n-1} \mu_i v_i  \\
\text{s.t.} \quad &   z_i \ge y_i - x_i , \quad i = 1, \cdots,n, \\
&    z_i \ge x_i - y_i , \quad i = 1, \cdots,n, \\
&    x_i - x_{i+1} \le u_i  , \quad i = 1, \cdots,n-1,\\
&    x_{i+1} - x_i \le v_i  , \quad i = 1, \cdots,n-1,\\
&    u \ge 0 ,  \, v \ge 0.
\end{aligned} 
\end{equation}
If some $\lambda_i$ and/or $\mu_i$ is infinite, certain modifications need to be considered. For example,  the isotonic regression \eqref{prob:iso_reg} can be equivalently reformulated as:
\begin{equation}
\label{prob:LPiso}
\begin{aligned}
\min_{x,z\in \Re^n} \quad  & \sum_{i=1}^{n} w_i z_i  \\
\text{s.t.} \quad &   z_i \ge y_i - x_i , \quad i = 1, \cdots,n, \\
&    z_i \ge x_i - y_i , \quad i = 1, \cdots,n, \\
&    x_i - x_{i+1} \le 0  , \quad i = 1, \cdots,n-1.
\end{aligned} 
\end{equation}
Now, we can compare our dynamic programming algorithm with Gurobi on randomly generated data sets under various settings of parameters.
For simplicity, we only consider the fixed weights here, i.e., $w_i \equiv 1$, $i=1,\ldots,n$. 
As one can observe in Table \ref{table:gurobi}, for all the test instances, our dynamic programing algorithm outperforms Gurobi by a significant margin. {Specifically, for {17} out of {21} instances, our algorithm can be at least {220} times faster than Gurobi. Moreover, for one instance in the Gaussian setting, our algorithm can be up to {5,662} times faster than Gurobi}.  
\begin{table}
\begin{small}
	\centering
	\begin{tabular}{@{}ccccc@{}}
		\toprule
		{$n$}   & parameters pattern & $t_{\rm DP}$ & $t_{\rm Gurobi}$   & \multicolumn{1}{c}{$t_{\rm Gurobi}/t_{\rm DP}$}     \\ \midrule
		1e4 & Isotonic             & 0.003            & 0.46         & 153.33    \\	
		1e5 & Isotonic             & 0.03             & 9.79         & 326.33    \\
		1e6 & Isotonic    		   & 0.58             & 358.82       & 620.38    \\
		1e7 & Isotonic    		   & 7.69             & *           & *    \\
		1e4 & Nearly-isotonic      & 0.003            & 0.34         & 113.33    \\
		1e5 & Nearly-isotonic      & 0.04             & 5.94         & 148.50    \\
		1e6 & Nearly-isotonic      & 0.31             & 154.15  	    & 497.26    \\
		1e7 & Nearly-isotonic      & 3.04             & *  	        & *    \\
		1e4 & Unimodal             & 0.003            & 0.43		    & 143.33    \\
		1e5 & Unimodal             & 0.04             & 9.55		    & 238.75    \\
		1e6 & Unimodal             & 0.55             & 363.32       & 660.58    \\
		1e7 & Unimodal             & 7.69             & *            & *    \\
		1e4 & Fused                & 0.003            & 0.68         & 226.67    \\
		1e5 & Fused                & 0.03             & 14.21        & 473.67    \\
		1e6 & Fused                & 0.30             & 299.21       & 997.36         \\
		1e7 & Fused                & 3.11             & *            & *        \\
		1e4 & Uniform              & 0.002            & 0.65     	 & 325.00         \\
		1e5 & Uniform              & 0.02             & 15.11        & 755.50         \\
		1e6 & Uniform              & 0.19             & 197.12       & 1037.47     \\
		1e7 & Uniform              & 1.84             & *       & *     \\
		1e4 & Gaussian             & 0.003            & 2.02 	     & 673.33        \\
		1e5 & Gaussian             & 0.02             & 58.14        & 2907.00        \\
		1e6 & Gaussian             & 0.21             & 1180.75      & 5662.62       \\
		1e7 & Gaussian             & 2.12             & *            & *       \\			
		1e4 & Mixed                & 0.002            & 0.79         & 395.00    \\	
		1e5 & Mixed                & 0.02             & 14.37        & 718.50    \\
		1e6 & Mixed    		       & 0.23             & 535.19       & 2326.91   \\
		1e7 & Mixed    		       & 2.94             & *            & *    \\
		 \bottomrule
	\end{tabular}
	\caption{Runtime (in seconds) comparisons between our dynamic programming Algorithm \ref{alg:DP} and Gurobi for solving problem \eqref{eq:abs} under different settings. Results are averaged over 10 simulations. {The entry ``*" indicates that Gurobi reports ``out of memory''}.}
	\label{table:gurobi}
\end{small}
\end{table}

Next, we test our dynamic programming algorithm for solving $\ell_2$-GNIO problem \eqref{eq:quad}. Similarly, experiments are done under various settings of weights and parameters. From Table \ref{table:l2w}, we see that our algorithm can robustly solve various $\ell_2$-GNIO problems and the computation time scales linearly with the problem dimension $n$. This matches our result on the ${\cal O}(n)$ running time of Algorithm \ref{alg:DP} for solving $\ell_2$-GNIO problems.

\begin{table}
\begin{small}
	\centering
	\begin{tabular}{@{}cccccccc@{}}
		\toprule
		\multicolumn{8}{c}{ { Runtime }   of Algorithm \ref{alg:DP} for \eqref{eq:quad} with $w_i = 1/2$  }                                   \\ \midrule
		$n$   & Isotonic & Nearly-isotonic & Unimodal & Fused  & Uniform & Gaussian  & Mixed \\ \midrule
		1e4 & 0.000     & 0.000            & 0.000      & 0.000  & 0.000    & 0.000  & 0.000  \\
		1e5 & 0.002     & 0.003            & 0.002      & 0.003  & 0.003    & 0.003  & 0.002   \\
		1e6 & 0.020     & 0.031            & 0.019      & 0.032  & 0.033    & 0.031  & 0.020    \\
		1e7 & 0.206     & 0.311            & 0.197      & 0.319  & 0.334    & 0.309  & 0.198    \\
	    \bottomrule
		\toprule
		\multicolumn{8}{c}{{ Runtime }    of Algorithm \ref{alg:DP} for \eqref{eq:quad}  with $w_i \sim  {\cal U}(10^{-2}, 10^2)$}                                   \\ \midrule
		$n$   & Isotonic & Nearly-isotonic & Unimodal & Fused  & Uniform & Gaussian  & Mixed  \\ \midrule
		1e4 & 0.000     & 0.000            & 0.000      & 0.000  & 0.000    & 0.000   & 0.000  \\
		1e5 & 0.002     & 0.003            & 0.002      & 0.003  & 0.003    & 0.003    & 0.002 \\
		1e6 & 0.021     & 0.029            & 0.022      & 0.030  & 0.031    & 0.029   & 0.021  \\
		1e7 & 0.213     & 0.304            & 0.224      & 0.303  & 0.315    & 0.296   & 0.185  \\
		\bottomrule
		\toprule
		\multicolumn{8}{c}{{ Runtime }   of Algorithm \ref{alg:DP} for \eqref{eq:quad}  with $w_i \sim  {\cal N}(100, 100)$}                                   \\ \midrule
		$n$   & Isotonic & Nearly-isotonic & Unimodal & Fused  & Uniform & Gaussian & Mixed   \\ \midrule
		1e4 & 0.000     & 0.000            & 0.000      & 0.000  & 0.000    & 0.000  & 0.000 \\
		1e5 & 0.002     & 0.003            & 0.002      & 0.003  & 0.003    & 0.003  & 0.002 \\
		1e6 & 0.020     & 0.029            & 0.020      & 0.029  & 0.030    & 0.029  & 0.018 \\
        1e7 & 0.205     & 0.298            & 0.211      & 0.292  & 0.312    & 0.299  & 0.172 \\		
		\bottomrule
	\end{tabular}
	\caption{Runtime (in seconds) of Algorithm \ref{alg:DP} for solving $\ell_2$-GNIO problem \eqref{eq:quad} under different settings. Results are averaged over 10 simulations.}
	\label{table:l2w}
\end{small}
\end{table}

Similar to the cases in \eqref{prob:LPgip} and \eqref{prob:LPiso}, $\ell_2$-GNIO problem \eqref{eq:quad} can  be equivalently recast as quadratic programming. Again, we compare our dynamic programming algorithm with Gurobi for solving problem \eqref{eq:quad} and present the detailed results in Table \ref{table:l2dpgu}. In the table, $0.000$ indicates that the runtime is less than $0.001$. Note that for simplicity, in these tests, we fix the weights by setting $w_i = 1/2$, $i=1,\ldots, n$.
We can observe that for most of the test instances in this class, our dynamic programming algorithm is able to outperform
the highly powerful quadratic programming solver in Gurobi by a factor of about {470--5122} in terms of computation times.

\begin{table}
\begin{small}
	\centering
	\begin{tabular}{@{}ccccc@{}}
		\toprule
		{$n$}   & parameters pattern & $t_{\rm DP}$ & $t_{\rm Gurobi}$  & \multicolumn{1}{c}{$t_{\rm Gurobi}/t_{\rm DP}$}     \\ \midrule
		1e4 & Isotonic    & 0.000           & 0.09        & $>90$ \\
		1e5 & Isotonic    & 0.002           & 1.78        & 890.00 \\
		1e6 & Isotonic    & 0.020           &  21.91      & 1095.50 \\
		1e7 & Isotonic    & 0.206           &  *      & * \\
		1e4 & Nearly-isotonic    & 0.000       & 0.09     & $>90$   \\
		1e5 & Nearly-isotonic    & 0.003       & 1.43     & 476.67     \\
		1e6 & Nearly-isotonic    & 0.031   &  14.69       & 473.87    \\
		1e7 & Nearly-isotonic    & 0.311   &  *       & *    \\
		1e4 & Unimodal    & 0.000          & 0.08         & $>80$ \\
		1e5 & Unimodal    & 0.002          & 2.16         & 1080.00 \\
		1e6 & Unimodal    & 0.019          &  31.04       & 1633.68  \\
		1e7 & Unimodal    & 0.197          &  *       & *  \\
		1e4 & Fused     & 0.000            & 0.12         & $>120$ \\
		1e5 & Fused     & 0.003            & 3.25         & 1083.33 \\
		1e6 & Fused    & 0.032             & 35.63   	  & 1113.44    \\
		1e7 & Fused    & 0.319             & *   	  & *    \\
		1e4 & Uniform   & 0.000            & 0.15        & $>150$     \\
		1e5 & Uniform   & 0.003            & 4.20         & 1400.00      \\
		1e6 & Uniform    & 0.033           & 43.07     	  & 1305.16   \\
		1e7 & Uniform    & 0.334           & *     	  & *  \\
		1e4 & Gaussian   & 0.000           & 0.15         & $>150$  \\
		1e5 & Gaussian   & 0.003           & 3.89         & 1296.67     \\
		1e6 & Gaussian    & 0.031          & 38.49        & 1241.61   \\ 
		1e7 & Gaussian    & 0.309          & *        & *  \\		
		1e4 & Mixed        & 0.000            & 0.15         & $>150$    \\	
		1e5 & Mixed        & 0.002           & 5.46        & 2730.00   \\
		1e6 & Mixed    	  & 0.020            & 102.44       & 5122.00   \\
		1e7 & Mixed       & 0.198            & *            & *    \\
		\bottomrule
	\end{tabular}
	\caption{Runtime (in seconds) comparisons between our dynamic programming Algorithm \ref{alg:DP} and Gurobi for solving problem \eqref{eq:quad} under different settings. Results are averaged over 10 simulations. { The entry ``*" indicates that Gurobi reports ``out of memory''}. }
	\label{table:l2dpgu}
\end{small}
\end{table}

\subsection{DP algorithm versus Gurobi: Real data}
In this subsection, we test our algorithm with real data. In particular, we collect the input vector $y\in \Re^n$ from various open sources. The following {four} data sets are collected and tested:
\begin{enumerate}
	\item gold: gold price index per minute in SHFE during (2010-01-01 -- 2020-06-30) \cite{indexdata};
	\item sugar: sugar price index per minute in ZCE (2007-01-01 -- 2020-06-30) \cite{indexdata};
	\item aep: hourly estimated energy consumption at American Electric Power (2004 -- 2008) \cite{energydata};
	\item ni: hourly estimated energy consumption at Northern Illinois Hub (2004 -- 2008) \cite{energydata}.
\end{enumerate}
In these tests, we fix the positive weights with $w_i = 1$ in $\ell_1$-GNIO problem \eqref{eq:abs} and $w_i = 1/2$ in $\ell_2$-GNIO problem \eqref{eq:quad}, $i=1,\ldots, n$. Similar to the experiments conducted in the previous subsection, under different settings of parameters $\{\lambda_i\}_{i=1}^{n-1}$ and $\{\mu_i\}_{i=1}^{n-1}$, we test our algorithm against Gurobi.
The detailed comparisons for solving $\ell_1$-GNIO and $\ell_2$-GNIO problems are presented in Tables \ref{tabreall1}.
As one can observe, our algorithm is quite robust and is much more efficient than Gurobi for solving many of these instances. 
For large scale $\ell_1$-GNIO problems with data sets {\bf sugar} and {\bf gold}, our algorithm can be over 600 times faster than Gurobi. Specifically, when solving the $\ell_1$-GNIO problem with the largest data set {\bf gold} under the ``Unimodal'' setting, our algorithm is 80,000 times faster than Gurobi. Meanwhile, for $\ell_2$-GNIO problems, our algorithm can be up to {18,000} times faster than Gurobi.
These experiments with real data sets again confirm the robustness and the high efficiency of our algorithm for solving various GNIO problems.

{
\begin{table}
\begin{footnotesize}
	\centering
	\resizebox{.99\columnwidth}{!}{
	\begin{tabular}{ccc|ccc|ccc}
		\toprule
		&&&\multicolumn{3}{c|}{$\ell_1$-GNIO} & \multicolumn{3}{c}{$\ell_2$-GNIO}\\[2pt]\midrule
		problem  & \multicolumn{1}{c}{$n$ } & parameters pattern & \multicolumn{1}{|c}{$t_{\rm DP}$} & \multicolumn{1}{c}{ $t_{\rm Gurobi}$  } & \multicolumn{1}{c|}{$t_{\rm Gurobi}/t_{\rm DP}$} & \multicolumn{1}{|c}{$t_{\rm DP}$} & \multicolumn{1}{c}{ $t_{\rm Gurobi}$  } & \multicolumn{1}{c}{$t_{\rm Gurobi}/t_{\rm DP}$}\\ [2pt] \midrule
		\multirow{7}{*}{sugar} & \multirow{7}{*}{923025} 
		& Isotonic 			& 0.512  & 989.37 & 1932.36 	& 0.019 & 39.81  & 2095.26 \\
		
		& & Nearly-isotonic 				& 0.271  & 168.69 & 622.47 & 0.018  & 20.11 & 1117.22\\
		& & Unimodal 						& 0.291  & 470.56 & 1617.04 & 0.024  & 39.32 & 1638.33 \\
		& & Fused 				& 0.426  & 410.98 & 964.74 & 0.027  & 34.74  & 1286.67\\
		& & Uniform 			& 0.162  & 661.20 & 4081.48 & 0.031 & 44.51 & 1435.81 \\
		& & Gaussian 			& 0.214  & 799.23 & 3734.72 &0.028 &50.09 &1788.93\\
		& & Mixed 			    & 0.164  & 702.13 & 4281.28 &0.017 & 312.25 & 18367.64 \\
		\hline     
		\multirow{7}{*}{gold} & \multirow{7}{*}{1097955} 
		& Isotonic     	& 0.203  & 772.91 & 3807.44 &0.021 & 38.16 &1817.14 \\ 
		& & Nearly-isotonic 				& 0.234  & 187.62 & 801.79 & 0.029 & 22.78 & 785.52 \\
		& & Unimodal 						& 0.235  & 19593.87 & 83378.20 &0.021 & 40.13 & 1910.95 \\
		& & Fused 			& 0.198  & 384.32 & 1941.01 & 0.027 &71.01 &2630.01 \\
		&                          & Uniform 			& 0.067  & 586.71 & 8756.87 &0.039 &51.66 &1324.62 \\
		&						     & Gaussian 			& 0.102  & 584.66 & 5731.96 &0.035 &75.07 &2144.86\\
		& & Mixed 			    & 0.054  & 782.46 & 14490.00 & 0.021 & 124.62 & 5934.28 \\
		\hline                        
		
		\multirow{7}{*}{aep} & \multirow{7}{*}{121273} 	                    		
		& Isotonic 	& 0.037  & 44.77 & 1210.01 &0.002 &5.12 &2560 \\ 
		& & Nearly-isotonic 				& 0.038  & 6.61  & 173.95 &0.003 &2.32 &733.33\\
		& & Unimodal 						& 0.040  & 31.57 &  789.25 &0.002 & 4.10 &2050.00 \\
		& & Fused     	& 0.037  & 19.15 & 517.56 &0.003 & 4.84 & 1613.33 \\
		&                         	 & Uniform 		& 0.021  & 18.47 & 879.52 &0.003 &5.05 &1683.33\\
		&							 & Gaussian 		& 0.037  & 29.02 & 784.32 &0.001 &4.96 &4960\\
		& & Mixed 			    & 0.020  & 14.74 & 737.00 &0.002 &35.50 & 17750.00\\
		\hline                            
		\multirow{7}{*}{ni} & \multirow{7}{*}{58450}                    	      
		& Isotonic 		& 0.018  & 14.55 & 808.33 &0.002 &1.78 & 895.00\\ 
		& & Nearly-isotonic 				& 0.018  & 2.76 & 153.33 & 0.001 &1.17 & 1170.00\\
		& & Unimodal 						& 0.019  & 10.47 & 551.05 &0.001 & 1.58 &1580.00\\
		&	& Fused      	& 0.018  & 5.14  & 285.56 & 0.001 & 2.21 & 2210.00\\
		&                            & Uniform 		& 0.011  & 8.21  & 746.36 &0.001 &5.36 &2680.00 \\
		&						    & Gaussian 		& 0.017  & 10.36 & 609.41 &0.001 &2.31 &2310.00\\
		& & Mixed 			    & 0.010  & 6.64 & 6640.00 & 0.001 &17.59 & 17590.00\\
		\bottomrule
	\end{tabular}%
	}
	\caption{Runtime (in seconds) comparisons between our dynamic programming Algorithm \ref{alg:DP} and Gurobi for solving $\ell_1$-GNIO problem \eqref{eq:abs} and $\ell_2$-GNIO problem \eqref{eq:quad} with real data.}
	\label{tabreall1}
\end{footnotesize}
\end{table}%
}

\subsection{DP algorithm versus Condat's direct algorithm}

To further evaluate the performance of our algorithm, we test our DP algorithm against Condat's direct algorithm \cite{Condat2013direct} for solving the following  fused lasso problem:
\begin{equation*}
\min_{x\in \Re^n}  \; \frac{1}{2} \sum_{i=1}^{n} (x_i - y_i)^2 + \lambda \sum_{i=1}^{n-1}|x_i - x_{i+1}|,
\end{equation*} 
where $\left\{ y_i \right\}_{i=1}^n$ are given data and $\lambda$ is a given positive regularization parameter.
As is mentioned in the beginning of the numerical section, {this} direct algorithm along with its highly optimized C implementation is regarded as one of the fastest algorithms for solving the above fused lasso problem \cite{Barbero2018modular}. %

In this subsection, we test five choices of $\lambda$, i.e., $\lambda = 1,2,5,10, 100$. The test data sets are the aforementioned four real data sets and two simulated random data sets of sizes $10^6$ and $10^7$. Table \ref{tabcondat} reports the detailed numerical results of the comparison between our algorithm and Condat's direct algorithm. As one can observe, in 14 out of 20 cases, our algorithm outperforms Condat's direct algorithm by a factor of about 1-2 in terms of computation times. These experiments indicate that for the fused lasso problem, our algorithm is highly competitive even when compared with the existing fastest algorithm in the literature. 

{
	\begin{table}
	\begin{small}
		\centering
		\begin{tabular}{cccccc}
			\toprule
			problem  & \multicolumn{1}{c}{$n$ } & $\lambda$ & \multicolumn{1}{c}{$t_{\rm DP}$} & \multicolumn{1}{c}{ $t_{\rm condat}$  } & \multicolumn{1}{c}{ $t_{\rm condat}/t_{\rm DP}$  }  \\ [2pt] \midrule
			\multirow{5}{*}{sugar} & \multirow{5}{*}{923025}        & 1 	& 0.030 & 0.038   & 1.27  \\ 
			& 						& 2 					               	& 0.030  & 0.034  & 1.13  \\
			& 							& 5			  						& 0.029  & 0.031  & 1.07  \\
			&                        & 10								    & 0.028  & 0.028  & 1.00  \\
			&                         & 100					                & 0.026  & 0.022  & 0.84 \\
			\hline    
			\multirow{5}{*}{gold} & \multirow{5}{*}{1097955}             & 1    	& 0.030  & 0.028 & 0.93 \\ 
			& 						& 2 						& 0.029  & 0.025 & 0.86 \\
			& 							& 5			  						& 0.028  & 0.023 &0.82 \\
			&                        & 10								& 0.028  & 0.022 & 0.79\\
			&                         & 100					& 0.026  & 0.019  &  0.73\\
			\hline                        		
			\multirow{5}{*}{aep} & \multirow{5}{*}{121273} 	                     		 & 1 	& 0.003  & 0.006 & 2.00 \\ 
			& 						& 2 						& 0.003  & 0.005 & 1.67\\
			& 							& 5			  						& 0.003  & 0.005 & 1.67\\
			&                        & 10								& 0.003  & 0.005 & 1.67\\
			&                         & 100					& 0.003  & 0.005  & 1.67 \\
			\hline                      
			\multirow{5}{*}{ni} & \multirow{5}{*}{58450} 	                  	        & 1 		& 0.001  & 0.002 & 2.00 \\
			& 						& 2 						& 0.001 & 0.002 & 2.00\\
			& 							& 5			  						& 0.001  & 0.002 & 2.00\\
			&                        & 10								& 0.001  & 0.002 & 2.00\\
			&                         & 100					& 0.001  & 0.002  & 2.00 \\
			\hline                            
			\multirow{5}{*}{random1} & \multirow{5}{*}{$10^6$} 	                  	        & 1 		& 0.032  & 0.042 & 1.31 \\
			& 						& 2 						& 0.031  & 0.042 & 1.35 \\
			& 							& 5			  						& 0.031  & 0.041 & 1.32 \\
			&                        & 10								& 0.030  & 0.041 & 1.37 \\
			&                         	& 100					& 0.026  & 0.035  & 1.35 \\
			\hline                            
			\multirow{5}{*}{random2} & \multirow{5}{*}{$10^7$} 	                  	        & 1 		& 0.286  & 0.370 & 1.29 \\
			& 						& 2 						& 0.279  & 0.375 & 1.34 \\
			& 							& 5			  						& 0.303  & 0.361 & 1.19 \\
			&                        & 10								& 0.299  & 0.362 & 1.21 \\
			&                         	& 100					& 0.296  & 0.282  & 0.95 \\
			\bottomrule
		\end{tabular}%
		\caption{Runtime (in seconds) comparisons between our dynamic programming Algorithm \ref{alg:DP} and {Condat's} algorithm \cite{Condat2013direct} for solving $\ell_2$-total variation problem \eqref{prob:flsa} with simulated and real data sets.}
		\label{tabcondat}%
	\end{small}
	\end{table}	
}

\section{Conclusions}
\label{sec:conclusion}
In this paper, we studied the generalized nearly isotonic optimization problems. The intrinsic recursion structure in these problems inspires us to solve them via the dynamic programming approach. By leveraging on the special structures of the generalized nearly isotonic regularizer in the objectives, we are able to show the computational feasibility and efficiency of the recursive minimization steps in the dynamic programming algorithm. Specifically, easy-to-implement and explicit updating formulas are derived for these steps. Implementation details, together with the optimal ${\cal O}(n)$ running time analysis, of our algorithm for solving $\ell_2$-GNIO problems are provided. Building upon all the aforementioned desirable results, our dynamic programming algorithm has demonstrated a clear computational advantage in solving large-scale $\ell_1$-GNIO and $\ell_2$-GNIO problems in
the numerical experiments when tested against the powerful commercial linear and quadratic programming solver Gurobi and other existing fast algorithms.

\begin{acknowledgement}
The authors would like to thank the Associate Editor and
anonymous referees for their helpful suggestions. 
\end{acknowledgement}

\section{Appendix}
\subsection{${\cal O}(n\log n)$ complexity of Algorithm \ref{alg:DP} for $\ell_1$-GNIO}
As is mentioned in Remark \ref{rmk:l1rmk}, when Algorithm \ref{alg:DP} is applied to solve $\ell_1$-GNIO, special data structures are needed to obtain a low-complexity implementation. Here, we propose to use a search tree to reduce the computation costs. The desired search tree should have following methods:
	\begin{itemize}
		\item \textit{insert}: adds a value into the tree while maintains the structure;
		\item \textit{popmax}: deletes and returns maximal value in the tree;
		\item \textit{popmin}: deletes and returns minimal value in the tree.
	\end{itemize}	
	These methods of the search tree are assumed to take ${\cal O}(\log n)$ time. We note that these requirements are not restrictive at all. In fact, the well-known red-black trees \cite{Cormen2009intro} satisfy all the mentioned properties. Now, we are able to sketch a proof for the ${\cal O}(n \log n)$ complexity of Algorithm \ref{alg:DP} for solving the $\ell_1$-GNIO. Using the desired search tree, in the ``sum'' step, the cost of insert a new breakpoint $y_i$  into the tree is $O(\log n)$. Note that at most $n$ breakpoints will be inserted. Hence, the  overall running time of the ``sum'' step is $O(n \log n)$.
	We also note that, in the ``update'' step, breakpoints will be deleted one by one via the \textit{popmax} or \textit{popmin} operations. {Recall that  the total number of  the deleted breakpoints in Algorithm \ref{alg:DP} is upper bounded by $2n$ (see the discussions in Section \ref{sec:imple_quadratic}).
	Therefore, the overall running time of the ``update'' step is  ${\cal O}(n \log n)$.} Hence, the total running time of Algorithm \ref{alg:DP} for solving $\ell_1$-GNIO is ${\cal O}(n \log n)$.

\end{document}